\newtheorem{fact}[theorem]{Fact}
\begin{document}

\title[Orthogonal Projection of a Infinite Round Cone]{Orthogonal Projection of an Infinite Round Cone in Real Hilbert Space}

\author[M. Kosor]{Mate Kosor}
\address[Mate Kosor]{Department of Maritime Studies\\University of Zadar\\ 23 000 Zadar, Croatia}
\email[Mate Kosor]{makosor@unizd.hr}

\keywords{Round Cone, Aperture, Projection, Angle, Reverse Cauchy inequality }

\subjclass[2010]{15A63, 46C05, 26D15, 51M05, 51M04 }

\abstract{We fully characterize orthogonal projections of infinite right circular (round) cones in real Hilbert spaces. 
Another interpretation is that, 
given two vectors in a real Hilbert space, we establish the optimal estimate on the angle between the orthogonal projections of the two vectors.
The estimate depends on the angle between the two vectors and the position of only one of the two vectors. Our results  
 also make a contributions to Cauchy-Bunyakovsky-Schwarz type inequalities.}

\maketitle

\section{Introduction and literature overview}

Let us introduce the topic in two simple settings.
\begin{example}
\label{exa:2Dcone}Let $C_{\mathbb{R}^{2}}(v,\varphi)=\left\{ u\in\mathbb{R}^{2}\::\: \left\langle u, v\right\rangle \geq\cos\varphi\left\Vert u\right\Vert \left\Vert v\right\Vert \right\} $.
We call it a filled angle or a one-sided infinite cone in $\mathbb{R}^{2}$. Let $V$ be any line trough
the origin and $P$ any projection on $V$. Not just orthogonal, but any projection of $C(v,\varphi)$
on $V$ can be characterized as either a whole line, or a closed half-line (bounded with
the origin), or just the origin.\end{example}

From this point on, we investigate orthogonal projections only.

\begin{problem}
\label{prob:3Dcone}The one-sided right circular (round) infinite cone in
$\mathbb{R}^{3}$ with apex in the origin, half-aperture $\varphi\in\left[0,\pi\right]$,
and axis direction given by vector $v\in\mathbb{R}^{3}$ is defined
by 
\begin{equation}
C_{\mathbb{R}^{3}}(v,\varphi)\overset{{\rm def}}{=}\left\{ u\in\mathbb{R}^{3}\::\: \left\langle u, v\right\rangle \geq\cos\varphi\left\Vert u\right\Vert \left\Vert v\right\Vert \right\} .\label{eq:3Dcone_definition}
\end{equation}
Let $V$ be a two-dimensional subspace in $\mathbb{R}^{3}$. What is the orthogonal
projection of $C_{\mathbb{R}^3}(v,\varphi)$
onto $V$? If the orthogonal projection is a round infinite cone in $V$, what is its direction
and aperture?
\end{problem}
The aim of this paper is to solve a generalization of problem \ref{prob:3Dcone}
to real Hilbert spaces. Our main result is Theorem \ref{thm:cone_projection}. When applied
to problem \ref{prob:3Dcone}, Theorem \ref{thm:cone_projection} 
distinguishes among three cases, based on $\varphi$ (the angle between $u$ and $v$)
and the angle between $v$ and $V$. Interestingly, orthogonal projections of round infinite cones
in any real Hilbert space produce only these three cases already present in three dimensional Euclidean space.
\begin{enumerate}
\item In the case when $\angle\left(v,V\right)>\frac{\pi}{2}-\varphi$,
the orthogonal projection of the cone is the whole subspace $V$. 
\item When $\angle\left(v,V\right)<\frac{\pi}{2}-\varphi$, the orthogonal projection
$P\left[C_{\mathbb{R}^{3}}(v,\varphi)\right]$ is $C_{\mathbb{R}^{2}}(P\, v,\varphi_{1})$,
which is a cone in $V$ with apex in the origin, the axis direction
given by $P\,v$ and half-aperture%
\footnote{Formula for $\varphi_{1}$ in Theorem \ref{thm:cone_projection} is
in terms of $\angle\left(v,V^{\bot}\right)$.%
} 
\begin{equation}
\varphi_{1}=\arccos\sqrt{\frac{\cos^{2}\varphi-\sin^{2}\angle\left(v,V\right)}{1-\sin^{2}\angle\left(v,V\right)}}\:.\label{eq:projected_angle}
\end{equation}

\item The border case, when $\angle\left(v,V\right)=\frac{\pi}{2}-\varphi$,
further depends on $\varphi$. 

\begin{enumerate}
\item When $\varphi=0$ and $v\perp V$, then $P\left[C_{\mathbb{R}^{3}}(v,0)\right]=\left\{ \left(0,0,0\right)\right\} $. 
\item When $v\in V$ and $\varphi=\nicefrac{\pi}{2}$, then $P\left[C_{\mathbb{R}^{3}}(v,\nicefrac{\pi}{2})\right]$
is a cone in $V$ with half-aperture $\nicefrac{\pi}{2}$ and axis given by
$P v$ (a closed half-space in $V$). 
\item When $0<\varphi<\nicefrac{\pi}{2}$ and $\angle\left(v,V\right)=\frac{\pi}{2}-\varphi$,
then the projection $P\left[C_{\mathbb{R}^{3}}(v,\varphi)\right]$
is the union of the interior of the half-space in $V$ oriented by
$P\,v$ and the origin: 
\[
P\left[C_{\mathbb{R}^{3}}(v,\varphi)\right]=\left\{ \left(0,0,0\right)\right\} \cup\left\{  y\in V\,:\, \left\langle y, Pv \right\rangle >0\right\} \mbox{.}
\]
\end{enumerate}
\end{enumerate}
Cones are well known objects in Hilbert space \cite[p.~86]{Conway1990}, of
which round cones (right circular cones) are a special case. Infinite round cone is
a rotational body with a filled angle as a radial cross-section. 
Therefore, the problem of round cone projection is related to problems of
planar angle projection,
which have long been studied in three-dimensional (3D) Euclidean space 
\cite{Rickey1937,Bodnarescu1956,Vigrand-1,Vigrand-2,Vigrand-3,Goldman1989}.
\cite{Rickey1937} and \cite{Bodnarescu1956} studied
the relationship between a fixed planar angle and the orthogonal projection
of that angle to another plane, which is another planar angle. Their
method was  to set up the appropriate
coordinate system and then analytically calculate the formulas for
value and position of the projected angle. These formulas could be
applied to the rotational body in order to deduce formula (\ref{eq:projected_angle})
in 3D Euclidean space. On the other hand, the projection of an
infinite round cone in 3D could be computed by "extending to infinity" the projected area of a 
finite right circular cone \cite{PennelDeignan1989}. Note that
applying coordinatization to circular cones in \cite{CircularConeIneq2013} provided results
valid only in finite dimensions.
 
Our method is different and suits naturally to general Hilbert space setting. 
We express geometric intuition from 3D in
terms of the standard inner product calculus and real analysis. 
 In equation (\ref{eq:3Dcone_definition}),
a round cone in 3D is defined using a so-called reverse Cauchy-Bunyakovsky-Schwarz
(CBS) type inequality, and we will use the same inequality to define a
round cone in general Hilbert space (Section \ref{sec:main_results}). 
In Section \ref{sec:Few-Applications}, we show few applications, among which Example \ref{inf-exmpl}
is in (infinite dimensionsional) Lebesgue space.  

Fifty years ago, paper \cite[p.~89]{DiazMetcalf1966} mentioned connection between cones
 and reverse\footnote{In \cite{DiazMetcalf1966} the inequality is called 
"complementary" to CBS or "running the other way".} triangle inequality 
in Hilbert spaces. Thus, it is not surprising that 
results on round infinite cone projections are directly
related to new CBS-type inequalities (Section \ref{sec:inequalities}).  

Known reverse CBS inequalities \cite{Dragomir2003} provide ways
to estimate $\cos\varphi$ in (\ref{eq:3Dcone_definition}) from below,
based on some knowledge about the projections of $u$ and $v$. For example, the P\'{o}lya-Szeg\"{o} inequality in $\mathbb{R}^n$ \cite{POLYA1970} estimates $\cos\varphi$ in 
(\ref{eq:3Dcone_definition}) based on lower and upper bounds 
of coordinates $m_{u}\leq u_{i}\leq M_{u}$ and $m_{v}\leq v_{i}\leq M_{v}$. 
Cassels' inequality \cite[page 330]{WATSON1955} and its refinement by Andrica and 
Badea \cite{ANDRICA1988} provide a bound on $\cos\varphi$ in (\ref{eq:3Dcone_definition})
based on the bounds of the ratio $m\leq\nicefrac{u_{i}}{v_{i}}\leq M$. On
the other hand, in this paper we estimate the angle (\ref{eq:projected_angle})
between orthogonal projections $Pu$ and $Pv$ based on the value of $\cos\varphi$
in (\ref{eq:3Dcone_definition}). Our result on a reverse CBS inequality, 
Theorem \ref{thm:inequality}, by contraposition  gives
a sufficient condition for an estimate that is more strict then the classical CBS:
$\left\langle u,v\right\rangle \leq\alpha\|u\|\|v\|$ (see 
Example \ref{exa:CBS_enhanced}).

\section{Assumptions, Notation and the Main result\label{sec:main_results}}

Throughout the remainder of the paper we make two assumptions.
\begin{enumerate}
\item $H$ is a real Hilbert space, $\left\Vert x\right\Vert =\sqrt{\left\langle x,x\right\rangle }$
denotes vector norm and $O$ denotes zero vector, 
\item $V$ is a closed subspace of $H$, and $V^{\bot}$ denotes its orthogonal
complement.
\end{enumerate}
From classical Hilbert space theory we know that there exists the unique
orthogonal projection onto $V$, which we denote by $P\,:\, H\to V$.
Given any set $\Omega\subseteq H$, its orthogonal projection onto
$V$ is denoted by $P\left[\Omega\right]$. We define angles between
vectors $u$ and $v$, and between vector $u$ and subspace $S$ with%
\footnote{Angle definition in (\ref{eq:angle_vectors}) allows the trivial cases $V=H$,
$V=\left\{ O\right\}$, and $v=O$ to be naturally included in Theorem \ref{thm:cone_projection},
without any special considerations. We use sign $\overset{{\rm def}}{=}$ throughout the paper 
in order to indicate that the relation is actually a definition.%
} 
\begin{eqnarray}
\angle(u,v) & \overset{{\rm def}}{=} & \sup\left\{ \varphi\in\left[0,\pi\right]\,:\,\left\langle u,v\right\rangle \leq\cos\varphi\left\Vert u\right\Vert \left\Vert v\right\Vert \right\} ,\label{eq:angle_vectors}\\
& = & \begin{cases}
\arccos\frac{\left\langle u,v\right\rangle }{\left\Vert u\right\Vert \left\Vert v\right\Vert }, & \mbox{if }u\neq O\mbox{ and }v\neq O,\\
\pi, & \mbox{if }u=O\mbox{ or }v=O.
\end{cases} \\
\angle(u,S) & \overset{{\rm def}}{=} & \inf\left\{ \angle(u,v)\,:\, v\in S\right\} \label{eq:angle_vector_subspace}
\end{eqnarray}
Thus $\angle(u,\left\{ O\right\} )=\pi=\angle(O,V)$.
Note that $\angle\left(v,V^{\bot}\right)+\angle\left(v,V\right)=\frac{\pi}{2}$,
except when $v=0$, or $V=H$, or $V=\left\{ O\right\} $. 
\begin{definition}
\label{def:Directed-cone}The infinite one-sided solid right circular (round) cone in $H$
with apex $a\in H$, axis direction given by $v\in H$, and half-aperture
$\varphi\in[0,\pi]$ is defined by
\[
C_{H}\left(a,v,\varphi\right)\overset{{\rm def}}{=}\left\{ u\in H\,:\,\left\langle u-a,v\right\rangle \geq\cos\varphi\,\left\Vert u-a\right\Vert \left\Vert v\right\Vert \right\} \mbox{.}
\]
The infinite one-sided solid right circular (round) cone with the apex included but with the rest
of the boundary excluded is defined by 
\[
C_{H}^{\circ}\left(a,v,\varphi\right)\overset{{\rm def}}{=}\left\{ a\right\} \cup\left\{ u\in H\,:\,\left\langle u-a,v\right\rangle >\cos\varphi\,\left\Vert u-a\right\Vert \left\Vert v\right\Vert \right\} \mbox{.}
\]

\end{definition}
When the apex is $O$, the notation is abbreviated: $C_{H}\left(v,\varphi\right)\overset{{\rm def}}{=}C_{H}\left(O,v,\varphi\right)$
and $C_{H}^{\circ}\left(v,\varphi\right)\overset{{\rm def}}{=}C_{H}^{\circ}\left(O,v,\varphi\right)$.
By definition, we have $C_{H}\left(a,O,\varphi\right)=H$ and $C_{H}^{\circ}\left(a,O,\varphi\right)=\left\{ a\right\} $.
By CBS inequality, we have $C_{H}\left(a,v,\pi\right)=H$ and $C_{H}^{\circ}(a,v,0)=\{a\}$.
Dilation (Minkowski addition) is denoted by $X+Y=\{x+y\,:\, x\in X\mbox{ and }y\in Y$\}.
Thus, $C_{H}\left(a,v,\varphi\right)=C_{H}\left(v,\varphi\right)+\left\{ a\right\} $. 

Given $a\in H$, $v\in H$ and $\varphi\in[0,\pi]$, the aim
of this paper is to determine the projection $P\left[C_{H}\left(a,v,\varphi\right)\right]$.
The main result is the following theorem.
\begin{theorem}
\label{thm:cone_projection}Let $C_{H}\left(a,v,\varphi\right)$ be
an infinite one-sided solid cone in $H$ and 
\begin{equation}
\varphi_{1}=\begin{cases}
\arccos\sqrt{{\displaystyle \frac{\cos^{2}\varphi-\cos^{2}\angle\left(v,V^{\bot}\right)}{1-\cos^{2}\angle\left(v,V^{\bot}\right)}}}, & \angle\left(v,V^{\bot}\right)\in\left\langle 0,\frac{\pi}{2}\right]\\
\varphi, & \mbox{else}.
\end{cases}\label{eq:phi1}
\end{equation}
Given the values of $\varphi$ and $\angle\left(v,V^{\bot}\right)$
in the first two columns of the following table, the orthogonal projection $P$ of $C_{H}\left(a,v,\varphi\right)$
onto a closed subspace $V$ can be determined from the third column of the same table.

\begin{center}
\begin{tabular}{|c|c||c|}
\hline 
$\varphi$ & $\angle\left(v,V^{\bot}\right)$ & ${\displaystyle \vphantom{\dfrac{X}{X}}P\left[C_{H}\left(a,v,\varphi\right)\right]}$\tabularnewline
\hline 
\hline 
$\varphi=0$ & $\angle\left(v,V^{\bot}\right)=0$ & $\vphantom{\dfrac{X}{X}}\{P\, a\}$\tabularnewline
\hline 
\multirow{2}{*}{$\varphi=\angle\left(v,V^{\bot}\right)$} & $\angle\left(v,V^{\bot}\right)\in\left\langle 0,\nicefrac{\pi}{2}\right\rangle $ & $\vphantom{\dfrac{X}{X}}C_{V}^{\circ}(P\, a,\, P\, v,\,\nicefrac{\pi}{2})$\tabularnewline
\cline{2-3} 
 & $\vphantom{\dfrac{X}{X}}\angle\left(v,V^{\bot}\right)\geq\nicefrac{\pi}{2}$ & \multirow{2}{*}{${\displaystyle \vphantom{\dfrac{X}{X}}C_{V}(P\, a,\, P\, v,\,\varphi_{1})}$}\tabularnewline
\cline{1-2} 
$\varphi<\angle\left(v,V^{\bot}\right)$ & $\vphantom{\dfrac{X}{X}}\angle\left(v,V^{\bot}\right)>0$ & \tabularnewline
\hline 
\multicolumn{2}{|c||}{$\varphi>\angle\left(v,V^{\bot}\right)$} & $\vphantom{\dfrac{X}{X}}V$\tabularnewline
\hline 
\end{tabular}
\par\end{center}

\end{theorem}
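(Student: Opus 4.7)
Projection commutes with translation in the sense that $P[C_H(a,v,\varphi)] = P[C_H(v,\varphi)] + \{Pa\}$, and the same identity holds for $C_V(Pa,Pv,\varphi_1)$ and $C_V^\circ(Pa,Pv,\varphi_1)$, so I first reduce to the apex-at-origin case $a=O$ and afterwards translate back by $Pa$. I dispose of the trivial configurations $v = O$, $V = \{O\}$, and $V = H$ by direct inspection against the table using the conventions on $\angle(\cdot,\cdot)$ with zero arguments. In the main case $v \neq O$ and $V^\perp \neq \{O\}$ one has $\theta := \angle(v, V^\perp) \in [0, \pi/2]$ and $\|P^\perp v\| = \|v\|\cos\theta$.

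Fix $y \in V$. Then $y \in P[C_H(v,\varphi)]$ precisely when some $z \in V^\perp$ makes $y+z$ satisfy $\langle y+z, v\rangle \geq \cos\varphi\, \|y+z\|\,\|v\|$. Orthogonal decomposition gives $\langle y+z, v\rangle = \langle y, Pv\rangle + \langle z, P^\perp v\rangle$ and $\|y+z\|^2 = \|y\|^2 + \|z\|^2$. Writing $z = t\hat{e}+z''$ with $\hat e$ a unit vector along $P^\perp v$ (any unit $\hat e \in V^\perp$ if $P^\perp v = O$) and $z'' \perp \hat e$, the inequality is tightest at $z'' = 0$ when $\cos\varphi \geq 0$, while for $\cos\varphi < 0$ taking $\|z''\|$ large already makes it hold, so the projection is $V$ in that regime. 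Setting $s=\|y\|$, $\alpha=\langle y,Pv\rangle$, $w=\|v\|$, $q=\|P^\perp v\|=w\cos\theta$, $c=\cos\varphi$, membership of $y$ in the projection is equivalent to
\[ \exists\, t \in \mathbb{R}: \quad g(t) := \alpha + tq - cw\sqrt{s^2 + t^2} \geq 0, \]
and the entire theorem will be extracted from the behavior of $g$, whose asymptotic slope $\lim_{t\to\infty} g'(t) = q - cw$ governs the geometry.

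Three regimes arise. If $q > cw$ then $g(t)\to+\infty$ and every $y$ belongs to the projection, giving $V$; this matches the row $\varphi > \theta$ and absorbs all $\varphi > \pi/2$. If $q = cw > 0$ then $g$ is strictly increasing with unattained supremum $\alpha$, so the condition becomes $\alpha > 0$, which combined with the forced inclusion $O \in P[C_H(v,\varphi)]$ produces $C_V^\circ(Pv, \pi/2)$; this is the row $\varphi = \theta \in (0,\pi/2)$. If $q < cw$, or $q = cw = 0$, then $g$ is maximized at $t^* = qs/\sqrt{c^2w^2 - q^2}$ (or at $t=0$ when $q=0$) with $\max g = \alpha - s\sqrt{c^2w^2 - q^2}$; the condition $\alpha \geq s\sqrt{c^2w^2 - q^2}$, divided through by $\|Pv\| = \sqrt{w^2 - q^2}$, rewrites as $\langle y,Pv\rangle \geq \cos\varphi_1\, \|y\|\,\|Pv\|$ with $\cos^2\varphi_1 = (c^2w^2 - q^2)/(w^2 - q^2)$, recovering (\ref{eq:phi1}) and identifying the projection as $C_V(Pv,\varphi_1)$; this covers the rows $\varphi < \theta$ and $\varphi = \theta \geq \pi/2$.

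The main obstacle is the border case $q = cw > 0$: the supremum of $g$ is not attained at any finite $t$, forcing a limit argument to show that the membership condition reads strictly $\alpha > 0$ and requiring explicit re-insertion of the origin, which is precisely why $C_V^\circ$ rather than $C_V$ appears in the table. A secondary but fiddly task is the bookkeeping to verify that every degenerate configuration --- $Pv = O$, $q = 0$, $\theta \in \{0, \pi/2\}$, or $v = O$ --- is routed to the correct table row through the conventions of Definition \ref{def:Directed-cone} and the extension of $\angle(\cdot,\cdot)$ to zero vectors; in particular, the $\varphi=\theta=0$ row collapses out of the $q=cw$ regime once $Pv=O$ forces $\alpha \equiv 0$.
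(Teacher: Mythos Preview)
Your approach is correct and genuinely different from the paper's. The paper argues ``forward'': for each regime it first bounds the angle $\angle(Pu,Pv)$ over all $u\in C_H(v,\varphi)$ (Theorem~\ref{thm:inequality}, Lemma~\ref{lemm:phi_equal_psi-helper}), then separately exhibits a witness $u$ attaining the bound (second half of Theorem~\ref{thm:inequality}, Propositions~\ref{prop:phi_geq_psi}, \ref{prop:phi_equal_psi-first_result}, \ref{prop:phi_equal_psi-second_result}), and finally invokes an auxiliary Lemma~\ref{lem:extreme-bound-to-projected-cone} to pass from a single boundary point to the full inclusion $C_V(Pv,\cdot)\subseteq P[C_H(v,\varphi)]$. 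You instead work ``fibre-wise'': fixing $y\in V$ and maximizing $\langle y+z,v\rangle-\cos\varphi\,\|y+z\|\,\|v\|$ over $z\in V^\perp$ reduces everything to the single scalar function $g(t)=\alpha+tq-cw\sqrt{s^2+t^2}$, whose concavity and asymptotic slope $q-cw$ immediately yield the three regimes and both inclusions at once. Your route is shorter and avoids the need for Lemma~\ref{lem:extreme-bound-to-projected-cone} and the explicit extremal constructions; the paper's route has the compensating advantage that its intermediate pieces stand alone as reverse Cauchy--Schwarz inequalities (Section~\ref{sec:inequalities}).

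One small wrinkle: your disposal of $\cos\varphi<0$ by ``taking $\|z''\|$ large'' tacitly needs $\dim V^\perp\geq 2$. When $\dim V^\perp=1$ and $P^\perp v\neq O$ one has $z''=0$ identically; but this is harmless, since $c<0$ forces $q-cw>0$ and the same $g(t)\to+\infty$ argument you use for the first regime already covers it. You may want to fold the $c<0$ case into the $q>cw$ regime rather than treating it separately via $z''$.
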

\begin{proof}
Suppose that the apex is $a=O$. We prove the theorem on a case by case basis.
Technical work is deferred to Section \ref{sec:inequalities}, which
deals with reverse CBS inequalities that underlay the definition of
the cone. 

\uline{Case $v=O$.} This case is trivial as $C_{H}\left(O,\varphi\right)=H$,
$P\left[C_{H}\left(O,\varphi\right)\right]=V=C_{V}\left(O,\varphi_{1}\right)$,
$\angle\left(O,V^{\bot}\right)=\pi$, and $\varphi_{1}=\varphi$.

\uline{Case $\dim V=0$.} Projection collapses
everything to $V=\{O\}$, $\varphi_{1}=\varphi$, and $\angle\left(v,V^{\bot}\right)=0$.
We have either $\varphi=\angle\left(v,V^{\bot}\right)=0$, or $\varphi>\angle\left(v,V^{\bot}\right)$
and both corresponding rows in the table of the theorem provide
the valid answer.

\uline{Case $V=H$.} Let $C_{H}\left(v,\varphi\right)$ be a cone,
such that $v\neq O$. We have $P\, v=v$, $P\left[C_{H}\left(v,\varphi\right)\right]=C_{V}\left(v,\varphi\right)$,
$\angle\left(v,V\right)=0$, $\angle\left(v,V^{\bot}\right)=\pi$
and $\varphi\leq\angle\left(v,V^{\bot}\right)$. From formula (\ref{eq:phi1}),
$\varphi_{1}=\varphi$. Thus, $P\left[C_{H}\left(v,\varphi\right)\right]=C_{H}(P\, v,\,\varphi)$,
which is the conclusion of the theorem.

\uline{Case $\varphi=0$.} Note that $\cos\varphi=1$, $C_{H}\left(v,0\right)=\left\{ t\, v\,:\, t\geq0\right\} $
and $P\left[C_{H}\left(v,0\right)\right]=\left\{ t\, P\, v\,:\, t\geq0\right\} =C_{V}\left(P\, v,0\right)$
unless $P\ v=O$. Note also that formula (\ref{eq:phi1})
produces $\varphi_{1}=0$ when $\varphi=0$. In the special case when
$P\, v=O$, then $\angle\left(v,V^{\bot}\right)=0$ and $P\left[C_{H}\left(v,0\right)\right]=\left\{ O\right\} $.

\uline{Case $\varphi<\angle\left(v,V^{\bot}\right)$.} As $\varphi\geq0$,
we must have $\angle\left(v,V^{\bot}\right)>0$. Therefore $P\, v\neq O$.
The cases $\dim V=0$, and $V=H$ have already been solved.  
In this case the main part of the proof is Theorem
\ref{thm:inequality}. The first part of Theorem \ref{thm:inequality}
states that: $u\in C_{H}\left(v,\varphi\right)$ implies $P\, u\in C_{V}\left(P\, v,\,\varphi_{1}\right)$,
i.e.~$P\left[C_{H}\left(v,\varphi\right)\right]\subseteq C_{V}\left(P\, v,\,\varphi_{1}\right)$.
In the subcase $\dim V\geq2$, the second part of Theorem \ref{thm:inequality}
establishes existence of $u\in C_{H}\left(v,\varphi\right)$ such
that $P u\neq O$, and $\left\langle P\, u,P\, v\right\rangle =\cos\varphi_{1}\,\left\Vert P\, u\right\Vert \left\Vert P\, v\right\Vert $.
By Lemma \ref{lem:extreme-bound-to-projected-cone}, we get $C_{V}\left(P\, v,\,\varphi_{1}\right)\subseteq P\left[C_{H}\left(v,\varphi\right)\right]$,
and the subcase $\dim V\geq2$ is solved.

In the subcase $\dim V=1$, there are just 2 unit vectors in $V$,
which we denote with $1_{V}$ and $-1_{V}$. Also, there are just 4
different ``cones'' with the apex $O$ in $V$: $\left\{ 0\right\} $,
$V$, $C_{V}\left(1_{V},0\right)$ and $C_{V}\left(-1_{V},0\right)$
(cf.\ Example \ref{exa:2Dcone}). Therefore $C_{V}(P\, v,\nicefrac{\pi}{2})=C_{V}(P\, v,\varphi_{1})=C_{V}\left(P\, v,0\right)$,
as $\varphi_{1}\in\left[0,\nicefrac{\pi}{2}\right]$. Without the
loss of generality, we can assume that $1_{V}$ and $P\, v$ are pointing in the same direction
(we are free to swap the names between $1_{V}$ and $-1_{V}$). Thus,
we get $C_{V}\left(1_{V},0\right)=\left\{ t\, Pv\,:\, t\in\mathbb{R}\right\} \subseteq P\left[C_{H}\left(v,\varphi\right)\right]$
and $C_{V}\left(P\, v,0\right)=C_{V}\left(1_{V},0\right)$. Together,
we have $C_{V}(P\, v,\varphi_{1})\subseteq P\left[C_{H}\left(v,\varphi\right)\right]$.
As noted before, from Theorem \ref{thm:inequality} we get $P\left[C_{H}\left(v,\varphi\right)\right]\subseteq C_{V}\left(P\, v,\,\varphi_{1}\right)$. 
Therefore, $P\left[C_{H}\left(v,\varphi\right)\right]=C_{V}\left(P\, v,\,\varphi_{1}\right)$.

\uline{Case $\varphi>\angle\left(v,V^{\bot}\right)$.} This implies
$V\neq H$, because of the angle definition in (\ref{eq:angle_vectors}).
The main part of the proof in this case is moved to Proposition \ref{prop:phi_geq_psi}
and Lemma \ref{lem:extreme-bound-to-projected-cone}. Proposition
\ref{prop:phi_geq_psi} shows that there is $u\in C_{H}\left(v,\varphi\right)$
such that $P u \neq O$, and $\left\langle P\, u,P\, v\right\rangle =\cos\pi\,\left\Vert P\, u\right\Vert \left\Vert P\, v\right\Vert $.
From there, Lemma \ref{lem:extreme-bound-to-projected-cone} concludes
that $C_{V}\left(P\, v,\pi\right)\subseteq P\left[C_{H}\left(v,\varphi\right)\right]$.
By the definition of cone and by the CBS inequality we know that $C_{V}\left(Pv,\pi\right)=V$.
Thus we get $P\left[C_{H}\left(v,\varphi\right)\right]=V$, which
is just what the theorem states in this case.

\uline{Case $\varphi=\angle\left(v,V^{\bot}\right)$.} Because of the other cases that have 
been discussed already, we can safely assume that
$\varphi=\angle\left(v,V^{\bot}\right)\in\left\langle 0,\nicefrac{\pi}{2}\right]$
and $\dim V\geq1$.
Note that $Pv\neq O$ and $\varphi_{1}=\nicefrac{\pi}{2}$.

First, suppose $\dim V=1$. As we noted earlier, there are just four
cones to distinguish in $V$. Note that $C_{V}^{\circ}(P\, v,\nicefrac{\pi}{2})=C_{V}(P\, v,\nicefrac{\pi}{2})=C_{V}\left(P\, v,0\right)$.
From $Pv\neq O$ we get $C_{V}\left(P\, v,0\right)\subseteq P\left[C_{H}\left(v,\varphi\right)\right]$.
From Lemma \ref{lemm:phi_equal_psi-helper} we get $P\left[C_{H}\left(v,\varphi\right)\right]\subseteq C_{V}(P\, v,\nicefrac{\pi}{2})$.
Thus, we conclude $P\left[C_{H}\left(v,\varphi\right)\right]=C_{V}(P\, v,\nicefrac{\pi}{2})=C_{V}^{\circ}(P\, v,\nicefrac{\pi}{2})$.

Next, suppose $\dim V\geq2$ and $V\neq H$. We discuss two subcases:
either $\varphi=\angle\left(v,V^{\bot}\right)\in\left\langle 0,\nicefrac{\pi}{2}\right\rangle $
or $\varphi=\angle\left(v,V^{\bot}\right)=\nicefrac{\pi}{2}$. Suppose
$\varphi=\angle\left(v,V^{\bot}\right)\in\left\langle 0,\nicefrac{\pi}{2}\right\rangle $.
By Proposition \ref{prop:phi_equal_psi-first_result} and Lemma \ref{lem:extreme-bound-to-projected-cone}:
$C_{V}\left(Pv,\nicefrac{\pi}{2}-\varepsilon\right)\subseteq P\left[C_{H}\left(v,\varphi\right)\right]$,
for each $\varepsilon\in\left\langle 0,\nicefrac{\pi}{2}\right]$.
Furthermore, Proposition \ref{prop:phi_equal_psi-second_result} yields
$C_{V}\left(-Pv,\nicefrac{\pi}{2}\right)\cap P\left[C_{H}\left(v,\varphi\right)\right]=\left\{ O\right\} $,
and therefore $P\left[C_{H}\left(v,\varphi\right)\right]=C_{V}^{\circ}\left(Pv,\nicefrac{\pi}{2}\right)$.

Finally, we prove the subcase $\varphi=\angle\left(v,V^{\bot}\right)=\nicefrac{\pi}{2}$
and $\dim V\geq2$. Then, $Pv=v$ and $C_{V}\left(v,\nicefrac{\pi}{2}\right)=C_{H}\left(v,\nicefrac{\pi}{2}\right)\cap V\subseteq P\left[C_{H}\left(v,\nicefrac{\pi}{2}\right)\right]$.
Lemma \ref{lemm:phi_equal_psi-helper} shows that $\left\langle u,v\right\rangle \geq0$
implies $\left\langle Pu,Pv\right\rangle \geq0$. Thus $P\left[C_{H}\left(v,\nicefrac{\pi}{2}\right)\right]\subseteq C_{V}\left(v,\nicefrac{\pi}{2}\right)$.
From $\varphi=\varphi_{1}=\nicefrac{\pi}{2}$ we conclude $P\left[C_{H}\left(v,\varphi\right)\right]=C_{V}\left(Pv,\varphi_{1}\right)$.

We have proved the theorem for apex $a=O$. The general case follows
from the properties of dilation 
$$
P\left[C_{H}\left(a,v,\varphi\right)\right]=P\left[C_{H}\left(v,\varphi\right)+\left\{ a\right\} \right]=P\left[C_{H}\left(v,\varphi\right)\right]+\left\{ P\, a\right\} \mbox{.}
$$
\end{proof}

\section{A Few Applications\label{sec:Few-Applications}}
\begin{remark}
Let $d$ be a vector orthogonal to $V$. Then $\Pi\overset{{\rm def}}{=}V+\{d\}$
is an affine subspace in $H$. Orthogonal projection of $u$ onto
$\Pi$ is defined by $P_{\Pi}u=P\, u + d$. Orthogonal projection
of cone $C_{H}\left(a,v,\varphi\right)$ onto $\Pi$ yields $P_{\Pi}\left[C_{H}\left(a,v,\varphi\right)\right]=P\left[C_{H}\left(a,v,\varphi\right)\right]+\left\{ d\right\} \subseteq\Pi$,
where $P\left[C_{H}\left(a,v,\varphi\right)\right]$ is given in Theorem
\ref{thm:cone_projection}.\end{remark}

\begin{remark} 
We can use Theorem \ref{thm:cone_projection} to describe projections of solid cones with the apex included but
without the rest of the boundary. It is easy to see that
\begin{equation}
P\left[ C_{H}^{\circ}\left(a,v,\varphi\right) \right] = P\left[\right. \bigcup_{\varepsilon>0} C_{H}\left(a,v,\varphi-\varepsilon \right) \left.\right] 
= \bigcup_{\varepsilon>0} P\left[ C_{H}\left(a,v,\varphi-\varepsilon \right) \right]\mbox{.}
\end{equation}
Therefore, if we extend the formula for $\varphi_1$ in \eqref{eq:phi1} by setting $\varphi_1=0$ when $\varphi=\angle\left(v,V^{\bot}\right)=0$ we get
\begin{equation}
P\left[ C_{H}^{\circ}\left(a,v,\varphi \right) \right] =
\begin{cases}
C_{V}^{\circ}\left(Pa,Pv,\varphi_1 \right), & \varphi\leq\angle\left(v,V^{\bot}\right),\\
V, & \varphi > \angle\left(v,V^{\bot}\right)\mbox{.}
\end{cases}\label{cones-with-apex-no-boundary}
\end{equation}
Furthermore, a relation equivalent to \eqref{cones-with-apex-no-boundary} 
is also valid for "open" cones. 
\end{remark}

\begin{example}
\label{ex:search_for_a_cone} Let $v\in H$ such that $Pv\neq O$.
Which is the widest half aperture $\varphi$ of an infinite solid
cone with the apex $a\in H$, axis and direction given by $v$ such that
the half aperture of a projected cone is at most $\varphi_{1}<\nicefrac{\pi}{2}$? 

Solving formula (\ref{eq:projected_angle}) for $\varphi$ yields
\begin{equation}
\varphi=\arccos\sqrt{\cos^{2}\angle\left(v,V^{\bot}\right)+\cos^{2}\varphi_{1}-\cos^{2}\angle\left(v,V^{\bot}\right)\,\cos^{2}\varphi_{1}}\mbox{.}\label{eq:aperture_relations-1}
\end{equation}
Theorem \ref{thm:cone_projection} establishes $P\left[C_{H}\left(a,v,\varphi\right)\right]=C_{V}\left(Pa,\, Pv,\,\varphi_{1}\right)$
and any larger $\varphi$ would yield aperture of projected cone larger
then $\varphi_{1}$. 
\end{example}

\begin{fact}
\label{fact:widest_angle_in_orthant}The widest half aperture of an
one-sided infinite cone that fits inside an orthant (hyperoctant) of $\mathbb{R}^{n}$
is $\varphi=\arccos\sqrt{\frac{n-1}{n}}$.
As dimension $n\to\infty$, the aperture $\varphi\to 0$.
\end{fact}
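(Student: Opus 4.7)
The plan is to reduce the problem to repeated applications of Theorem \ref{thm:cone_projection} with the one-dimensional subspaces spanned by the coordinate axes, followed by a symmetric optimization in $v$.

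First, I would observe that by reflection symmetry we may assume the orthant in question is the nonnegative orthant $\{x\in\mathbb{R}^n : x_i\geq 0 \mbox{ for all } i\}$. This is the intersection of the closed half-spaces $H_i := \{x : x_i\geq 0\}$. Since a set $S\subseteq\mathbb{R}^n$ lies in $H_i$ iff its orthogonal projection onto $V_i := \mathrm{span}(e_i)$ lies in the nonnegative ray $\{t\, e_i : t\geq 0\}$, the cone $C_{\mathbb{R}^n}(v,\varphi)$ fits in the orthant iff for every $i$ its projection onto each $V_i$ is contained in the nonnegative ray of $V_i$.

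Second, I would apply Theorem \ref{thm:cone_projection} with $V=V_i$ (so $V_i^\perp$ is the coordinate hyperplane $\{x_i=0\}$). Because $\dim V_i=1$, the only possibilities for $P[C_{\mathbb{R}^n}(v,\varphi)]$ are $\{O\}$, a closed ray, a ray missing its open boundary, or all of $V_i$; the last case, which ruins containment, is precisely the case $\varphi>\angle(v,V_i^\perp)$. In every other case the projection sits on the ray pointing in the direction of $Pv=v_i e_i$, so containment in the nonnegative ray is equivalent to $v_i\geq 0$. Hence $C_{\mathbb{R}^n}(v,\varphi)$ fits inside the orthant iff $v_i\geq 0$ and $\varphi\leq\angle(v,V_i^\perp)$ for every $i$.

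Third, I would compute and optimize. Normalizing $\|v\|=1$ with $v_i\geq 0$, a direct inner-product calculation gives $\cos\angle(v,V_i^\perp)=\sqrt{1-v_i^2}$, so the admissible half-apertures are bounded by $\varphi\leq\arccos\max_i\sqrt{1-v_i^2}=\arccos\sqrt{1-\min_i v_i^2}$. Maximizing over unit $v$ in the orthant amounts to maximizing $\min_i v_i$ subject to $\sum_i v_i^2=1$; by symmetry (or Lagrange multipliers, or the inequality $\min_i v_i\leq 1/\sqrt n$ which follows from $\sum v_i^2\geq n(\min v_i)^2$) the maximum value $1/\sqrt n$ is achieved uniquely at $v=(1,\ldots,1)/\sqrt n$. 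This yields $\varphi=\arccos\sqrt{(n-1)/n}$, and the limit $\varphi\to 0$ as $n\to\infty$ follows from $\sqrt{(n-1)/n}\to 1$. I do not foresee a real obstacle: Theorem \ref{thm:cone_projection} makes the geometric containment condition transparent, and the remaining optimization is a standard symmetric extremum.
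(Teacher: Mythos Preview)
Your argument is correct, but it takes a different route from the paper's own proof. The paper projects onto the two-dimensional coordinate planes $V_{ij}=\mathrm{span}(e_i,e_j)$ and requires each projected cone to fit inside a quadrant, i.e.\ to have half-aperture at most $\varphi_1=\pi/4$ about $Pv=(1,1)$; it then invokes formula~\eqref{eq:projected_angle} (equivalently \eqref{eq:phi1}) with $\varphi_1=\pi/4$ and $\cos^2\angle(v,V_{ij}^\perp)=(n-2)/n$ to solve for $\varphi$. You instead project onto the one-dimensional coordinate axes $V_i=\mathrm{span}(e_i)$, where Theorem~\ref{thm:cone_projection} degenerates to the dichotomy ``the projection is a ray in the direction of $v_ie_i$'' versus ``the projection is all of $V_i$,'' and the constraint becomes simply $\varphi\le\angle(v,V_i^\perp)$ with $v_i\ge 0$. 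Your approach is more elementary and matches the orthant's defining half-space constraints directly; it also makes the optimization in $v$ completely explicit via the inequality $\min_i v_i\le 1/\sqrt n$, whereas the paper asserts the optimal axis $v=(1,\ldots,1)$ somewhat tersely. The paper's version, on the other hand, actually exercises the quantitative content of \eqref{eq:phi1}, which in your one-dimensional reduction plays no role. Both routes are valid and short; yours is arguably the cleaner of the two.
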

\begin{proof}
All the projections onto coordinate 2D planes $V_{ij}$ of such a infinite one-sided
cone need to fit into a quadrant, which is a directed cone with half aperture
$\varphi_{1}=\nicefrac{\pi}{4}$ around directed axis $P\, v=(1,1)$.
Therefore, the directed cone with the widest aperture needs to
have the axis $v=(1,1,\ldots,1)$. By formula (\ref{eq:definition-Psi})
$\angle\left(v,V_{ij}\right)=\arccos\frac{\sqrt{2}}{\sqrt{n}}$ and
$\cos^{2}\angle\left(v,V_{ij}^{\bot}\right)=\frac{n-2}{n}$. Formula
(\ref{eq:projected_angle}) yields $\varphi=\arccos\sqrt{\frac{n-1}{n}}$
and Theorem \ref{thm:cone_projection} establishes the fact.\end{proof}

\begin{example}\label{inf-exmpl}
Let $\alpha\in\left(0,1\right)$ and  $H=L^{2}\left(0,1\right)$ Lebesgue space.
What is the smallest $t>0$ such that for all $u\in H$,
\begin{equation}
\underbrace{\;\int_{0}^{1}u\geq\alpha\sqrt{\int_{0}^{1}u^{2}}\;}_{\star} \quad\implies\quad \int_{0}^{t}u\geq0\quad ?
\label{question}
\end{equation}
 
Let $v={\bf 1}_{(0,1)}\in H$, where we denote with ${\bf 1}_{X}$ the characteristic function of $X$. 
Then $u\in C_H(v,\arccos\alpha)$ if and only if $u$
satisfies $\star$ in \eqref{question}.
Let $V_t=\{ f\in L^{2}\left(0,1\right) : f(x)=0 \mbox{ for almost all } x\in\left(t,1\right) \}$.
$V_t$ is a closed subspace of $H$ and isometrically isomorphic with $L^{2}\left(0,t\right)$. 
Let $P_{V_t}$ be the orthogonal projection onto $V_t$. Then $P_{V_t}(v)={\bf 1}_{(0,t)}$.
We calculate the
angles $\angle(v,V_t)=\angle(v,{\bf 1}_{(0,t)})=\arccos\sqrt{t}$ and 
$\angle(v,V_{t}^{\bot})=\angle(v,{\bf 1}_{(t,1)})=\arccos\sqrt{1-t}$.

From Theorem \ref{thm:cone_projection}, if $\arccos\alpha>\arccos\sqrt{1-t}$, then $P_{V_t}[C_H(v,\arccos\alpha)]=V_t$.
In other words, when $t<1-\alpha^{2}$, then for appropriate
$u$ that satisfies $\star$ in \eqref{question} we can get $\int_{0}^{t}u<0$. On the other hand, 
for $\arccos\alpha\leq\arccos\sqrt{1-t}$, $P_{V_t}[C_H(v,\arccos\alpha)]$ is either 
$C_{V_t}({\bf 1}_{(0,t)},\varphi_1)$ or $C_{V_t}^{\circ}({\bf 1}_{(0,t)},\nicefrac{\pi}{2})$,
where $\varphi_1<\nicefrac{\pi}{2}$ can be calculated from \eqref{eq:phi1} with $\varphi=\arccos\alpha$.
In any case, if $t\geq1-\alpha^{2}$,
then for any $u$ that satisfies $\star$ in \eqref{question} we have $\int_{0}^{t}u\geq\sqrt{t-1+\alpha^{2}}\sqrt{\int_{0}^{t}u^{2}}$.
Therefore we have shown that $t=1-\alpha^{2}$ is the smallest number that satisfies \eqref{question}. 
\end{example}

\section{Reverse CBS Inequalities \label{sec:inequalities}}

In this section we provide technical results used in the proof of Theorem \ref{thm:cone_projection}
on cone projections. Cones have been defined in terms of reverse CBS
inequality. Given a reverse CBS inequality in Hilbert space $H$,
we need to establish an ``optimal'' reverse CBS inequality for orthogonal
projections onto a closed subspace $V$. This is the converse from
reverse CBS inequalities in survey \cite[section 5.]{Dragomir2003}.
The contraposition of our results provides sufficient conditions
for an estimate that is more strict then the classical CBS inequality: 
$\left\langle u,v\right\rangle <\alpha\left\Vert u\right\Vert \left\Vert v\right\Vert $.
This is described in Example \ref{exa:CBS_enhanced}.
\begin{remark}
\label{rem:decomposition}Throughout this section we use subscripts
to denote $u_{1}\overset{{\rm def}}{=}P\: u$ and $u_{2}\overset{{\rm def}}{=}u-P\, u$,
for any $u\in H$. For the unique decomposition of $u\in H$ as the
sum of two orthogonal vectors, one from $V$ and other from $V^{\bot}$
we write $u=u_{1}\dotplus u_{2}$. For $u\neq O$ we use formulas
\begin{equation}
\angle\left(u,V^{\bot}\right)=\arccos\frac{\left\Vert u_{2}\right\Vert }{\left\Vert u\right\Vert }=\begin{cases}
\arctan\frac{\left\Vert u_{1}\right\Vert }{\left\Vert u_{2}\right\Vert }, & u_{2}\neq O,\\
\frac{\pi}{2}, & u_{2}=O,
\end{cases}\label{eq:definition-Psi}
\end{equation}
\end{remark}

\begin{example}
\label{exa:CBS_enhanced}Suppose we have set $v\in H$, $V$ closed
subspace of $H$, and $\alpha\in\left(0,1\right)$. We are looking for a sufficient
condition on the component $u_{1}=P\, u$, that can establish an inequality
stronger then the CBS inequality: $\left\langle u,v\right\rangle <\alpha\left\Vert u\right\Vert \left\Vert v\right\Vert$.

We use contraposition of Theorem \ref{thm:inequality}. Condition
$\alpha=\cos\varphi>\cos\angle\left(v,V^{\bot}\right)=\frac{\|v_{2}\|}{\|v\|}$
gives $\|v_{2}\|<\frac{\alpha}{\sqrt{1-\alpha^{2}}}\|v_{1}\|$. If
that condition is met, then $\left\langle u_{1},v_{1}\right\rangle <\cos\varphi_{1}\left\Vert u_{1}\right\Vert \left\Vert v_{1}\right\Vert $
is the sufficient condition for $\left\langle u,v\right\rangle <\alpha\left\Vert u\right\Vert \left\Vert v\right\Vert $,
where $\varphi_{1}$ is the same as in (\ref{eq:phi1}). Therefore,
\begin{multline}
\left(\|v_{2}\|<\frac{\alpha\|v_{1}\|}{\sqrt{1-\alpha^{2}}}\mbox{ and }
\left\langle u_{1},v_{1}\right\rangle <\sqrt{\frac{\alpha^{2}\|v\|^2-\|v_{2}\|^2}{\|v\|^2-\|v_{2}\|^2}}\left\Vert u_{1}\right\Vert \left\Vert v_{1}\right\Vert \right)\\
\implies\qquad\left\langle u,v\right\rangle <\alpha\left\Vert u\right\Vert \left\Vert v\right\Vert .
\end{multline}

\end{example}
The following result is a cornerstone in the proof of Theorem \ref{thm:cone_projection}.
\begin{theorem}
\label{thm:inequality}Let $v\in H$, $V$ closed subspace of $H$,
$\varphi$ such that $0\leq\varphi<\angle\left(v,V^{\bot}\right)$
and $\varphi_{1}$ as in (\ref{eq:phi1}). Let $\alpha=\cos\varphi$ and $\alpha_1=\cos\varphi_1$. Then for arbitrary $u\in H$,
\begin{equation}
\left\langle u,v\right\rangle \geq\alpha\,\left\Vert u\right\Vert \left\Vert v\right\Vert \qquad\Longrightarrow\qquad\left\langle P\: u,\, P\, v\right\rangle \geq\alpha_{1}\,\left\Vert Pu\right\Vert \left\Vert Pv\right\Vert \mbox{.}\label{eq:implikacija_projekcija}
\end{equation}
Moreover, when $\dim V\geq2$ and $v\neq O$ then our $\alpha_{1}$
is the largest possible in (\ref{eq:implikacija_projekcija}). In other words, when
$\dim V\geq2$, then there exists $u\in H$ such that $P u\neq O$, $\left\langle u,v\right\rangle \geq\alpha\,\left\Vert u\right\Vert \left\Vert v\right\Vert $
and $\left\langle P\: u,\, P\, v\right\rangle =\alpha_{1}\,\left\Vert Pu\right\Vert \left\Vert Pv\right\Vert $.\end{theorem}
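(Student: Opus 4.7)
My plan is to decompose $u = u_1 \dotplus u_2$ and $v = v_1 \dotplus v_2$ along $V\oplus V^\perp$ as in Remark \ref{rem:decomposition}, and reduce everything to two applications of the classical CBS inequality. Writing $a=\|u_1\|,\ b=\|u_2\|,\ c=\|v_1\|,\ d=\|v_2\|$, the definition of $\varphi_1$ reorganizes to the clean identity
\[
\alpha_1^2 c^2 + d^2 \;=\; \alpha^2 \|v\|^2,
\]
and the hypothesis $\varphi<\angle(v,V^\perp)$ is precisely what makes $\alpha_1^2 c^2=\alpha^2\|v\|^2-d^2$ nonnegative (equivalently $\alpha>d/\|v\|$). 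The nontrivial case is $v\neq O$ with $v\notin V^\perp$, so $c>0$; the degenerate cases $v=O$ and $v\in V^\perp$ are handled trivially or vacuously.

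For the implication I would combine two CBS inequalities. First, the classical CBS applied to $u_2,v_2$ in $V^\perp$ gives $\langle u_2,v_2\rangle\leq bd$, so by orthogonal decomposition and the hypothesis
\[
\langle u_1,v_1\rangle \;\geq\; \alpha\|u\|\|v\| - bd.
\]
Second, CBS in $\mathbb{R}^2$ applied to $(a,b)$ and $(\alpha_1 c,\,d)$ gives
\[
\alpha_1 ac + bd \;\leq\; \sqrt{a^2+b^2}\,\sqrt{\alpha_1^2 c^2 + d^2} \;=\; \|u\|\cdot\alpha\|v\|,
\]
where the second radical collapses by the key identity. Rearranging and combining yields $\langle u_1,v_1\rangle \geq \alpha_1 ac = \alpha_1\|Pu\|\|Pv\|$, which is the desired conclusion.

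For the sharpness assertion (assuming $\dim V\geq 2$ and $v\neq O$), I would construct an extremal $u$ by forcing equality in both CBS steps. Because $\dim V\geq 2$ and $v_1\neq O$, I can select a unit vector $\hat u_1\in V$ with $\langle \hat u_1,v_1\rangle=\alpha_1 c$; then setting $u_1=\alpha_1 c\,\hat u_1$ and $u_2=v_2$ produces equality in both CBS applications simultaneously. A short direct computation confirms $\|u\|=\alpha\|v\|$, $\langle u,v\rangle = \alpha_1^2 c^2+d^2 = \alpha^2\|v\|^2 = \alpha\|u\|\|v\|$, $Pu = u_1\neq O$, and $\langle Pu,Pv\rangle = \alpha_1^2 c^2 = \alpha_1\|Pu\|\|Pv\|$, as required.

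The main obstacle is spotting the second CBS pair $(a,b) \leftrightarrow (\alpha_1 c,d)$; this pairing is unmotivated until one expands the defining formula of $\alpha_1$ and notices that the norm of $(\alpha_1 c,d)$ collapses to $\alpha\|v\|$. Once that identity is in hand, the upper bound on $\alpha_1 ac+bd$ and the lower bound on $\langle u_1,v_1\rangle$ snap together, and the equality cases of the two CBS inequalities dictate the construction of the extremal $u$ essentially uniquely.
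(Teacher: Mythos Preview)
Your argument is correct and takes a genuinely different route from the paper's. Both proofs begin the same way---decompose along $V\oplus V^\perp$, set aside degenerate cases, and use CBS in $V^\perp$ to obtain $\langle u_1,v_1\rangle\ge\alpha\|u\|\|v\|-\|u_2\|\|v_2\|$---but then diverge. The paper treats the resulting lower bound as a function $g(t)=\cos\varphi\sqrt{1+t^2}\sqrt{1+b^2}-tb$ of the ratio $t=\|u_2\|/\|u_1\|$ (with $b=\|v_2\|/\|v_1\|$ fixed) and minimizes it by one-variable calculus: the first and second derivatives show $g$ is convex with minimum value $\sqrt{\cos^2\varphi\,(1+b^2)-b^2}=\cos\varphi_1$. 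Your substitute for this optimization is a second CBS inequality, applied in $\mathbb R^2$ to $(\|u_1\|,\|u_2\|)$ against $(\alpha_1\|v_1\|,\|v_2\|)$; the whole point is that the identity $\alpha_1^2\|v_1\|^2+\|v_2\|^2=\alpha^2\|v\|^2$---which is just the defining formula for $\varphi_1$ rewritten---collapses the right-hand norm to $\alpha\|v\|$. This is calculus-free and shorter, and it makes the sharpness construction automatic: equality in your two CBS steps forces $u_2\parallel v_2$ and $(\|u_1\|,\|u_2\|)\parallel(\alpha_1\|v_1\|,\|v_2\|)$, which yields your $u$. The paper's explicit extremizer $\cos\varphi_1\,v_1+\|v_1\|\sin\varphi_1\,z+\tfrac{1}{\cos\varphi_1}\,v_2$ is the same vector as yours up to the positive scalar $\cos\varphi_1$. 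One minor omission: add $V=H$ to your list of degenerate cases, since then $\angle(v,V^\perp)=\pi$ by the paper's convention and $\alpha$ need not be positive (so $\sqrt{\alpha^2\|v\|^2}\neq\alpha\|v\|$ in general), but $P=\mathrm{id}$ makes the implication trivial there anyway.
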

\begin{proof}
We use notation $v=v_{1}\dotplus v_{2}$ and $u=u_{1}\dotplus u_{2}$
as in Remark \ref{rem:decomposition}. If $v_{1}=O$ or $V=H$, (\ref{eq:implikacija_projekcija})
is trivial. Thus, without the loss of generality we can suppose $v_{1}\neq O$
and $0\leq\varphi<\angle\left(v,V^{\bot}\right)\leq\nicefrac{\pi}{2}$.
Then%
\footnote{Formula numbers above and under (in)equality sign establish a cross
reference that can help to understand the relationship. This notation
is used throughout the article.%
}, 
\begin{flalign}
0 & \leq\frac{\left\Vert v_{2}\right\Vert }{\left\Vert v\right\Vert }\overset{\eqref{eq:definition-Psi}}{=}\cos\angle\left(v,V^{\bot}\right)<\cos\varphi\leq1\mbox{.}\label{eq:condition_on_angle_and_v}\\
\frac{\left\Vert v_{2}\right\Vert }{\left\Vert v_{1}\right\Vert } & =\frac{\cos\angle\left(v,V^{\bot}\right)}{\sqrt{1-\cos^{2}\angle\left(v,V^{\bot}\right)}}\label{eq:useful_identity}
\end{flalign}
When $u_{1}=O$ then (\ref{eq:implikacija_projekcija}) is trivial.
The main part of the proof investigates 
\begin{multline}
\min\quad\frac{\left\langle u_{1},v_{1}\right\rangle }{\left\Vert u_{1}\right\Vert \left\Vert v_{1}\right\Vert }=\min\quad\frac{\cos\theta\,\left\Vert u\right\Vert \left\Vert v\right\Vert -\left\langle u_{2},v_{2}\right\rangle }{\left\Vert u_{1}\right\Vert \left\Vert v_{1}\right\Vert }=\\
=\min\qquad\underbrace{\cos\theta\,\sqrt{1+\frac{\left\Vert u_{2}\right\Vert ^{2}}{\left\Vert u_{1}\right\Vert ^{2}}}\sqrt{1+\frac{\left\Vert v_{2}\right\Vert ^{2}}{\left\Vert v_{1}\right\Vert ^{2}}}-\left\langle \frac{u_{2}}{\left\Vert u_{1}\right\Vert },\frac{v_{2}}{\left\Vert v_{1}\right\Vert }\right\rangle }_{(\blacksquare)}\label{eq:minimum1}
\end{multline}
under the conditions that $u_{1}\neq O$ and $\left\langle u,v\right\rangle =\cos\theta\left\Vert u\right\Vert \left\Vert v\right\Vert \geq\cos\varphi\,\left\Vert u\right\Vert \left\Vert v\right\Vert $,
for some $\theta\in[0,\varphi]$ that depends on $u$ and $v$. Under
these conditions 
\[
(\blacksquare)\geq\cos\varphi\,\sqrt{1+\frac{\left\Vert u_{2}\right\Vert ^{2}}{\left\Vert u_{1}\right\Vert ^{2}}}\sqrt{1+\frac{\left\Vert v_{2}\right\Vert ^{2}}{\left\Vert v_{1}\right\Vert ^{2}}}-\frac{\left\Vert u_{2}\right\Vert \left\Vert v_{2}\right\Vert }{\left\Vert u_{1}\right\Vert \left\Vert v_{1}\right\Vert }=f\left(\frac{\left\Vert u_{2}\right\Vert }{\left\Vert u_{1}\right\Vert },\frac{\left\Vert v_{2}\right\Vert }{\left\Vert v_{1}\right\Vert }\right)
\]
where ${\displaystyle f(a,b)=\cos\varphi\,\sqrt{1+a^{2}}\sqrt{1+b^{2}}-ab}$.

As $\nicefrac{\left\Vert u_{2}\right\Vert }{\left\Vert u_{1}\right\Vert }\geq0$
and $v$ has been fixed from the start, together with condition (\ref{eq:condition_on_angle_and_v}),
it is sufficient to examine the function $a\overset{g}{\longmapsto}f(a,b)$
for all $a\geq0$ and a fixed $b$, taking into account that $\cos\varphi>\nicefrac{b}{\sqrt{1+b^{2}}}.$
The continuity of $g$, the first, and the second derivative of $g$, together show that $g$ is
convex with the only minimizer $a=\nicefrac{b}{\sqrt{\cos^{2}\varphi\left(1+b^{2}\right)-b^{2}}}$
and the minimum $\sqrt{\cos^{2}\varphi\,\left(1+b^{2}\right)-b^{2}}$.
Therefore 
\[
(\blacksquare)\geq\sqrt{\cos^{2}\varphi\,\left(1+\frac{\left\Vert v_{2}\right\Vert ^{2}}{\left\Vert v_{1}\right\Vert ^{2}}\right)-\frac{\left\Vert v_{2}\right\Vert ^{2}}{\left\Vert v_{1}\right\Vert ^{2}}}\overset{(\ref{eq:useful_identity})}{=}\sqrt{\frac{\cos^{2}\varphi-\cos^{2}\angle\left(v,V^{\bot}\right)}{1-\cos^{2}\angle\left(v,V^{\bot}\right)}}>0\mbox{.}
\]
Thus ${\displaystyle \left\langle u_{1},v_{1}\right\rangle \geq\sqrt{\frac{\cos^{2}\varphi-\cos^{2}\angle\left(v,V^{\bot}\right)}{1-\cos^{2}\angle\left(v,V^{\bot}\right)}}\,\left\Vert u_{1}\right\Vert \left\Vert v_{1}\right\Vert }$
whenever $\left\Vert u\right\Vert \neq O$. The first part of the
theorem has been proved without the assumption $\dim V\geq2$.

The assumptions for the second part of the theorem include $v\neq O$ and $0<\angle\left(v,V^{\bot}\right)$. Therefore, $v_{1} \neq O$.
Another assumption of the second part of the theorem is $\dim V\geq2$, and so $z\in V$ can be chosen such that $\left\Vert z\right\Vert =1$
and $z\perp v_{1}$. It is straightforward to check that for $u=\cos\varphi_{1}\, v_{1}+\left\Vert v_{1}\right\Vert \sin\varphi_{1}\, z+\frac{1}{\cos\varphi_{1}}\, v_{2}$:
\begin{align}
\left\Vert u_{1}\right\Vert  & =\sqrt{\cos\varphi_{1}^{2}\left\Vert v_{1}\right\Vert ^{2}+\left\Vert v_{1}\right\Vert ^{2}\sin^{2}\varphi_{1}\left\Vert z\right\Vert ^{2}}=\left\Vert v_{1}\right\Vert\neq0 \label{eq:norm_special_u1}\\
\left\langle u_{1},v_{1}\right\rangle  & =\cos\varphi_{1}\,\left\Vert v_{1}\right\Vert ^{2}=\cos\varphi_{1}\,\left\Vert u_{1}\right\Vert \left\Vert v_{1}\right\Vert \nonumber \\
\left\Vert u\right\Vert  & =\sqrt{\left\Vert u_{1}\right\Vert ^{2}+\frac{\left\Vert v_{2}\right\Vert ^{2}}{\cos^{2}\varphi_{1}}}\overset{(\ref{eq:norm_special_u1})}{=}\left\Vert v_{1}\right\Vert \sqrt{1+\frac{\left\Vert v_{2}\right\Vert ^{2}}{\left\Vert v_{1}\right\Vert ^{2}\cos^{2}\varphi_{1}}}\nonumber \\
 & \overset{(\ref{eq:phi1})}{\underset{(\ref{eq:useful_identity})}{=}}\left\Vert v_{1}\right\Vert \frac{\cos\varphi}{\sqrt{\cos^{2}\varphi-\cos^{2}\angle\left(v,V^{\bot}\right)}}\label{eq:norm_special_u}\\
\left\Vert v\right\Vert  & =\left\Vert v_{1}\right\Vert \sqrt{1+\frac{\left\Vert v_{2}\right\Vert ^{2}}{\left\Vert v_{1}\right\Vert ^{2}}}\overset{(\ref{eq:useful_identity})}{=}\frac{\left\Vert v_{1}\right\Vert }{\sqrt{1-\cos^{2}\angle\left(v,V^{\bot}\right)}}\label{eq:norm_special_v}
\end{align}
Then, 
\begin{align*}
\left\langle u,v\right\rangle  & =\cos\varphi_{1}\,\left\Vert v_{1}\right\Vert ^{2}+\frac{\left\Vert v_{2}\right\Vert ^{2}}{\cos\varphi_{1}}=\left\Vert v_{1}\right\Vert ^{2}\left(\cos\varphi_{1}+\frac{\left\Vert v_{2}\right\Vert ^{2}}{\cos\varphi_{1}\,\left\Vert v_{1}\right\Vert ^{2}}\right)\\
 & \overset{(\ref{eq:phi1})}{\underset{(\ref{eq:useful_identity})}{=}}\left\Vert v_{1}\right\Vert ^{2}\frac{\cos^{2}\varphi}{\sqrt{\cos^{2}\varphi-\cos^{2}\angle\left(v,V^{\bot}\right)}\sqrt{1-\cos^{2}\angle\left(v,V^{\bot}\right)}}\overset{(\ref{eq:norm_special_u})}{\underset{(\ref{eq:norm_special_v})}{=}}\cos\varphi\left\Vert u\right\Vert \left\Vert v\right\Vert 
\end{align*}
Thus, when $\dim V\geq2$, formula (\ref{eq:phi1}) gives the smallest
possible $\varphi_{1}\in[0,\pi]$ and $\alpha_1=\cos\varphi_1$ is the largest possible in (\ref{eq:implikacija_projekcija}). 
\end{proof}
From Theorem \ref{thm:inequality} we were able to get a useful estimate
on the angle between projections when $\angle\left(u,v\right)<\angle\left(v,V^{\bot}\right)$.
On the other hand, the following result shows that when the angle
between vectors $\angle\left(u,v\right)>\angle\left(v,V^{\bot}\right)$,
then no useful estimate on the angle between projections can be provided.
In other words, the worst case scenario $\angle\left(Pu,Pv\right)=\pi$
is possible.
\begin{proposition}
\label{prop:phi_geq_psi}Let $V$ be a closed subspace of Hilbert
space $H$, with $\dim V\geq1$ and $V\neq H$. Let $P$ be the orthogonal
projection onto $V$. Let $v\in H$, $v\neq O$, $\varphi\in\left\langle \angle\left(v,V^{\bot}\right),\pi\right]$, and $\alpha=\cos\varphi$.
Then there exists $u\in H$ such that 
\begin{equation}
Pu\neq O, \quad\left\langle u,v\right\rangle \geq\alpha\,\left\Vert u\right\Vert \left\Vert v\right\Vert \quad\mbox{and}\quad\left\langle Pu,\, Pv\right\rangle =\left(-1\right)\,\left\Vert Pu\right\Vert \left\Vert Pv\right\Vert \mbox{.}\label{eq:conclusion_of_prop1-1}
\end{equation}
\end{proposition}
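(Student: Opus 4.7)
The plan is to construct $u$ explicitly. The condition $\langle Pu,Pv\rangle=-\|Pu\|\|Pv\|$, combined with $Pu\neq O$, says exactly that $Pu$ is a strictly negative scalar multiple of $Pv$ (or that $Pv=O$, in which case the equality is automatic). Writing $v=v_1\dotplus v_2$ as in Remark \ref{rem:decomposition}, the natural first try is $u_\epsilon = -\epsilon v_1 + v_2$ for a small $\epsilon>0$: then $P u_\epsilon = -\epsilon v_1$ is antiparallel to $Pv=v_1$ as soon as $v_1\neq O$, so the only real work is verifying the angle condition $\langle u_\epsilon,v\rangle\geq\cos\varphi\,\|u_\epsilon\|\|v\|$.

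I would first treat the generic case $v_1\neq O$ and $v_2\neq O$. A direct computation gives
\[
\langle u_\epsilon, v\rangle = -\epsilon\|v_1\|^2 + \|v_2\|^2,\qquad \|u_\epsilon\|^2 = \epsilon^2\|v_1\|^2 + \|v_2\|^2,\qquad \|v\|^2=\|v_1\|^2+\|v_2\|^2,
\]
so as $\epsilon\to 0^+$ the ratio $\langle u_\epsilon,v\rangle/(\|u_\epsilon\|\|v\|)$ tends to $\|v_2\|/\|v\|=\cos\angle(v,V^{\bot})$, which exceeds $\cos\varphi$ strictly by hypothesis. Continuity therefore yields some $\epsilon_0>0$ for which $u=u_{\epsilon_0}$ already satisfies $\langle u,v\rangle\geq\cos\varphi\,\|u\|\|v\|$, while $Pu=-\epsilon_0 v_1\neq O$ automatically gives $\langle Pu,Pv\rangle=-\epsilon_0\|v_1\|^2=-\|Pu\|\|Pv\|$.

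The two degenerate configurations demand minor modifications. If $v\in V$ (so $v_2=O$), then $\angle(v,V^{\bot})=\nicefrac{\pi}{2}$ and the hypothesis forces $\varphi\in\left\langle \nicefrac{\pi}{2},\pi\right]$; since $V\neq H$ I pick $w\in V^{\bot}$ with $\|w\|=1$ and set $u=-v+tw$, so $Pu=-v\neq O$ and $\cos\angle(u,v)=-\|v\|/\sqrt{\|v\|^2+t^2}$ sweeps continuously through $[-1,0)$ as $t$ ranges over $[0,\infty)$, hitting every value $\leq\cos\varphi$ for $t$ large enough. If instead $v\in V^{\bot}$ (so $v_1=O$), then $Pv=O$ and the antiparallelism condition is automatic, so it suffices to find $u$ with $Pu\neq O$ and $\angle(u,v)<\varphi$; since $\dim V\geq 1$ I take $u=v+\delta w$ with $w\in V$, $\|w\|=1$, and then $\cos\angle(u,v)=\|v\|/\sqrt{\|v\|^2+\delta^2}\to 1$ as $\delta\to 0^+$, giving the estimate for $\delta$ small, because $\varphi>\angle(v,V^{\bot})=0$.

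The main (but quite modest) obstacle is the continuity argument in the generic case: the inequality $\langle u,v\rangle\geq\cos\varphi\,\|u\|\|v\|$ is attained only because of the strict separation $\varphi>\angle(v,V^{\bot})$, which lets the limit value $\cos\angle(v,V^{\bot})$ dominate $\cos\varphi$ with room to spare. This is precisely why the boundary case $\varphi=\angle(v,V^{\bot})$ is excluded here and handled elsewhere in the proof of Theorem \ref{thm:cone_projection}.
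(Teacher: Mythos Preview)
Your argument is correct and mirrors the paper's own proof almost exactly: both split into the three cases $v_1,v_2\neq O$, $v_2=O$, and $v_1=O$, and in each case pick a one-parameter family with $Pu$ antiparallel to $Pv$ (or arbitrary when $Pv=O$) and use continuity together with the strict inequality $\varphi>\angle(v,V^{\bot})$ to secure the cone condition. One small slip: in the $v\in V$ case you want $\cos\angle(u,v)\geq\cos\varphi$, not ``$\leq$'', and this indeed holds for $t$ large since $-\|v\|/\sqrt{\|v\|^2+t^2}\to 0^{-}>\cos\varphi$.
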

\begin{proof}
We use notation from Remark \ref{rem:decomposition}. 
Assumption of the proposition is that either $v_{1}\neq O$ or $v_{2}\neq O$. We will prove the proposition on a case by case basis.
We will choose a different continuous parametrization $u(t)$ for each case, and then investigate the continuous function
\begin{equation}
\label{function-phi-geq-psi}
f(t)\overset{{\rm def}}{=}\left\langle u(t),v\right\rangle -\cos\varphi\,\left\Vert u(t)\right\Vert \left\Vert v\right\Vert.
\end{equation}

Consider the case $v_{1}\neq O$ and $v_{2}\neq O$. Then $\nicefrac{\pi}{2}>\angle\left(v,V^{\bot}\right)>0$.
Let $u(t)=tv_{1}\dotplus v_{2}$ and $f$ as in \eqref{function-phi-geq-psi}. Then
\begin{multline}
f(t)=\left\Vert v_{1}\right\Vert ^{2}t+\left\Vert v_{2}\right\Vert ^{2}-\cos\varphi\,\sqrt{t^{2}\left\Vert v_{1}\right\Vert ^{2}+\left\Vert v_{2}\right\Vert ^{2}}\sqrt{\left\Vert v_{1}\right\Vert ^{2}+\left\Vert v_{2}\right\Vert ^{2}}\\
\overset{\eqref{eq:definition-Psi}}{=}\left\Vert v_{1}\right\Vert ^{2}t+\left\Vert v_{2}\right\Vert ^{2}
-\cos\varphi\,\sqrt{t^{2}\left\Vert v_{1}\right\Vert ^{2}+\left\Vert v_{2}\right\Vert ^{2}}\frac{\left\Vert v_{2}\right\Vert }{\cos\angle\left(v,V^{\bot}\right)}\mbox{.}\label{eq:f_definition-1}
\end{multline}
Plugging $t=0$ in (\ref{eq:f_definition-1})
gives 
$
f(0) =\left\Vert v_{2}\right\Vert ^{2}-{\displaystyle \dfrac{\cos\varphi}{\cos\angle\left(v,V^{\bot}\right)}}\left\Vert v_{2}\right\Vert ^{2}>0
$.
By the continuity of $f$, there exists some $t_{0}<0$ such that
$f(t_{0})>0$. Now $u=u(t_{0})$ satisfies (\ref{eq:conclusion_of_prop1-1}): the first part as $Pu=t_{0}v_1\neq O$,
the second part because $0<f(t_{0})=\left\langle u(t_{0}),v\right\rangle -\cos\varphi\,\left\Vert u(t_{0})\right\Vert \left\Vert v\right\Vert $
and the third as $\left\langle u_{1},v_{1}\right\rangle =\left\langle t_{0}v_{1},v_{1}\right\rangle =
-\left\Vert t_{0}v_{1}\right\Vert \left\Vert v_{1}\right\Vert =-\left\Vert u_{1}\right\Vert \left\Vert v_{1}\right\Vert $.

Next, consider the case $v_{1} = O$ and $v_{2}\neq O$. 
As $\dim V \geq 1$ we can choose $z\in V$ such that $\left\Vert z \right\Vert=1$.
Let $u(t)=t z\dotplus v_{2}$ and $f$ as in \eqref{function-phi-geq-psi}. Then
\begin{equation}
f(t)=\left\Vert v_{2}\right\Vert ^{2}-\cos\varphi\,\sqrt{t^{2}+\left\Vert v_{2}\right\Vert ^{2}}\,\left\Vert v_{2}\right\Vert 
\mbox{ and } f(0)=\left\Vert v_{2}\right\Vert ^{2}\left(1-\cos\varphi\right)\mbox{.}
\end{equation}
As $\varphi>\angle\left(v,V^{\bot}\right)=0$ thus $f(0)>0$. By the continuity of $f$, there exists some $t_{0}<0$ such that
$f(t_{0})>0$. Vector $u=u(t_{0})$ satisfies (\ref{eq:conclusion_of_prop1-1}) because $Pu=t_{0}z\neq O$,
 $0<f(t_{0})=\left\langle u,v\right\rangle -\cos\varphi\,\left\Vert u\right\Vert \left\Vert v\right\Vert $
and $\left\langle u_{1},v_{1}\right\rangle = 0 =-\left\Vert u_{1}\right\Vert \left\Vert v_{1}\right\Vert $.

Consider the final case $v_{1}\neq O$ and $v_{2}=O$. Then $\pi\geq\varphi>\angle\left(v,V^{\bot}\right)=\nicefrac{\pi}{2}$.
As $V\neq H$ there exists $z\in V^{\bot}$ such that 
$\left\Vert z\right\Vert =1$. Let $u(t)=tv_{1}\dotplus z$ and $f$ as in \eqref{function-phi-geq-psi}. Then
\[
f(t)=\left\Vert v_{1}\right\Vert ^{2}t-\cos\varphi\, \sqrt{t^{2}\left\Vert v_{1}\right\Vert ^{2}+1}\,\left\Vert v_{1}\right\Vert\mbox{.}
\]
As $\cos\varphi<0$ therefore $f(0)=-\cos\varphi\left\Vert v_{1}\right\Vert >0$.
By the continuity of $f$ there exists some $t_{0}<0$ such that $f(t_{0})>0$.
Now $u=u(t_{0})$ satisfies (\ref{eq:conclusion_of_prop1-1}):
the first part as $u_1=t_{0} v_1\neq O$, the second part because $0<f(t_{0})=\left\langle u,v\right\rangle -\cos\varphi\,\left\Vert u\right\Vert \left\Vert v\right\Vert $
and the third as $\left\langle u_{1},v_{1}\right\rangle =\left\langle t_{0}v_{1},v_{1}\right\rangle =-\left\Vert t_{0}v_{1}\right\Vert \left\Vert v_{1}\right\Vert =
-\left\Vert u_{1}\right\Vert \left\Vert v_{1}\right\Vert $. 
\end{proof}
The previous two results discussed the cases $\angle\left(u,v\right)<\angle\left(v,V^{\bot}\right)$
and $\angle\left(u,v\right)>\angle\left(v,V^{\bot}\right)$. The border
case $\angle\left(u,v\right)=\angle\left(v,V^{\bot}\right)$ is different,
as:
\begin{itemize}
\item $\inf\angle\left(Pu,Pv\right)=\nicefrac{\pi}{2}$ (combine Proposition
\ref{prop:phi_equal_psi-first_result} and Lemma \ref{lemm:phi_equal_psi-helper}), 
\item the infimum is achieved in the case $v\in V$ (see Proposition \ref{prop:phi_equal_psi-second_result}),
\item the infimum is not achieved in the case $v\notin V$ (see Proposition \ref{prop:phi_equal_psi-second_result}).\end{itemize}
\begin{proposition}
\label{prop:phi_equal_psi-first_result}Let $P$ be an orthogonal projection onto
a closed subspace $V$ of Hilbert space $H$, with $\dim V\geq2$.
Let $v\in H$, $v\neq O$, $\angle\left(v,V^{\bot}\right)>0$,
and $\varepsilon\in\left\langle 0,1\right]$. Then there exists $u\in H$
such that $P u\neq O$, 
\[
\left\langle u,v\right\rangle \geq\cos\angle\left(v,V^{\bot}\right)\,\left\Vert u\right\Vert \left\Vert v\right\Vert \quad\mbox{ and }\quad\left\langle Pu,Pv\right\rangle =\varepsilon\,\left\Vert Pu\right\Vert \left\Vert Pv\right\Vert \mbox{.}
\]
\end{proposition}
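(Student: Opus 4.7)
The plan is to exhibit $u$ explicitly, specialising the tightness construction from the second half of the proof of Theorem \ref{thm:inequality} to $\varphi_{1}=\arccos\varepsilon$. Write $\psi=\angle(v,V^{\bot})$ and $v=v_{1}\dotplus v_{2}$. From $v\neq O$ and $\psi>0$ we have $v_{1}\neq O$ (else $v=v_{2}\in V^{\bot}$ would force $\psi=0$). Since $\dim V\geq 2$, I would pick a unit vector $z\in V$ with $z\perp v_{1}$ and set
$$u \;=\; \varepsilon\, v_{1}\;+\;\sqrt{1-\varepsilon^{2}}\,\|v_{1}\|\,z\;+\;\tfrac{1}{\varepsilon}\, v_{2}.$$

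The projection-side properties follow immediately from the construction: $Pu=u_{1}=\varepsilon v_{1}+\sqrt{1-\varepsilon^{2}}\,\|v_{1}\|\,z$, whence $\|Pu\|^{2}=\|v_{1}\|^{2}>0$ by $z\perp v_{1}$, so $Pu\neq O$, and $\langle Pu,Pv\rangle=\varepsilon\|v_{1}\|^{2}=\varepsilon\|Pu\|\|Pv\|$.

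For the reverse CBS inequality $\langle u,v\rangle\geq\cos\psi\,\|u\|\|v\|$, I would compute directly
$$\langle u,v\rangle=\varepsilon\|v_{1}\|^{2}+\tfrac{1}{\varepsilon}\|v_{2}\|^{2},\qquad \|u\|^{2}=\|v_{1}\|^{2}+\tfrac{1}{\varepsilon^{2}}\|v_{2}\|^{2}.$$
When $V\neq H$ one has $\cos\psi\,\|v\|=\|v_{2}\|$ (cf.\ Remark \ref{rem:decomposition}), both sides of the target inequality are nonnegative, and squaring reduces it to $\varepsilon^{2}\|v_{1}\|^{4}+\|v_{1}\|^{2}\|v_{2}\|^{2}\geq 0$, which is immediate. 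The corner case $V=H$ (so $v_{2}=O$ and $\psi=\pi$) is absorbed by dropping the last summand of $u$: then $\cos\psi=-1$ and $\langle u,v\rangle=\varepsilon\|v\|^{2}\geq -\|u\|\|v\|$ by CBS. I do not expect any real obstacle: the scaling $1/\varepsilon$ on $v_{2}$ was chosen precisely so that the $V^{\bot}$-contribution to $\langle u,v\rangle$ outweighs $\cos\psi\,\|u\|\|v\|$, and the final check is a sum of squares.
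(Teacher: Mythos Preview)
Your proof is correct and uses the same ansatz as the paper: both take $u$ of the form $\varepsilon v_{1}+(\text{vector in }V\text{ orthogonal to }v_{1})+(\text{multiple of }v_{2})$, which forces $Pu\neq O$ and $\langle Pu,Pv\rangle=\varepsilon\|Pu\|\|Pv\|$ automatically, leaving only the reverse CBS inequality to check. The only difference is in how that last check is carried out. The paper leaves the $V^{\bot}$-coefficient as a free parameter $t$, writes $f(t)=\varepsilon+t\|v_{2}\|^{2}-\|v_{2}\|\sqrt{1+t^{2}\|v_{2}\|^{2}}$ (after normalising $\|v_{1}\|=1$), observes $\lim_{t\to\infty}f(t)=\varepsilon>0$, and concludes by continuity that some $t_{0}$ works. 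You instead fix the coefficient to $1/\varepsilon$ (precisely the value inherited from the sharpness construction in Theorem~\ref{thm:inequality}) and verify the inequality by a direct sum-of-squares computation. Your route is a bit more explicit and avoids the limit argument; the paper's route makes it clearer that any sufficiently large $t$ would do. Neither buys anything essential over the other.
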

\begin{proof}
We use notation from Remark \ref{rem:decomposition}. As $\angle\left(v,V^{\bot}\right)>0$,
we have $\left\Vert v_{1}\right\Vert >0$ . Without the loss of generality
we assume that $\left\Vert v_{1}\right\Vert =1$ (if the statement
of the proposition is true for vector $\nicefrac{v}{\left\Vert v_{1}\right\Vert }$
then it is also true for $v$).

As $\dim V\geq2$, there exists $z\in V$ such that $z\perp v_{1}$
and $\left\Vert z\right\Vert =\sqrt{1-\varepsilon^{2}}$. Let $u(t)\overset{{\rm def}}{=}\varepsilon v_{1}+tv_{2}+z$ and
$u_1(t)\overset{{\rm def}}{=}P\ u(t)$.
Then $u_1(t)=\varepsilon v_{1}+z\neq O$, $\left\Vert u_1(t)\right\Vert =\sqrt{\varepsilon^{2}\left\Vert v_{1}\right\Vert ^{2}+\left\Vert z\right\Vert ^{2}}=1$,
and $\left\langle u_1(t),\, v_{1}\right\rangle =\varepsilon=\varepsilon\left\Vert u_1(t)\right\Vert \left\Vert v_{1}\right\Vert $.
So we only need to find $t$ such that $\left\langle u(t),v\right\rangle \geq\cos\angle\left(v,V^{\bot}\right)\,\left\Vert u(t)\right\Vert \left\Vert v\right\Vert $. 

We investigate the real function 
\[
f(t)\overset{{\rm def}}{=}\left\langle u(t),v\right\rangle -\cos\angle\left(v,V^{\bot}\right)\,\left\Vert u(t)\right\Vert \left\Vert v\right\Vert =\varepsilon+t\left\Vert v_{2}\right\Vert ^{2}-\left\Vert v_{2}\right\Vert \sqrt{1+t^{2}\left\Vert v_{2}\right\Vert ^{2}}\,.
\]
Note that $f$ is differentiable and strictly increasing, with ${\displaystyle \lim_{t\to+\infty}f(t)=\varepsilon>0}$.
Therefore, $f$ assumes positive value for some $t_{0}\in\mathbb{R}$.
Vector $u(t_{0})$ satisfies the conclusion of the proposition. \end{proof}
\begin{lemma}
\label{lemm:phi_equal_psi-helper}Let $P$ be an orthogonal projection
onto a closed subspace $V$ of Hilbert space $H$, with $V\neq H$.
Let $v\in H$, $v\neq O$ such that $\angle\left(v,V^{\bot}\right)>0$.
Then 
\[
\forall u\in H,\qquad\left\langle u,v\right\rangle \geq\cos\angle\left(v,V^{\bot}\right)\,\left\Vert u\right\Vert \left\Vert v\right\Vert \quad\Longrightarrow\quad\left\langle Pu,Pv\right\rangle \geq0\mbox{.}
\]
\end{lemma}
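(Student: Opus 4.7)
The plan is to decompose both vectors using Remark \ref{rem:decomposition} and argue by contradiction, with a harmless separate check of the degenerate subcase $v_{2}=O$.

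First I would write $u=u_{1}\dotplus u_{2}$ and $v=v_{1}\dotplus v_{2}$, so that $\langle u,v\rangle = \langle u_{1},v_{1}\rangle + \langle u_{2},v_{2}\rangle$, and use formula \eqref{eq:definition-Psi} to rewrite $\cos\angle(v,V^{\bot}) = \|v_{2}\|/\|v\|$. The hypothesis of the lemma then reads
\begin{equation*}
\langle u_{1},v_{1}\rangle + \langle u_{2},v_{2}\rangle \;\geq\; \|v_{2}\|\,\|u\|.
\end{equation*}
The target inequality $\langle Pu,Pv\rangle \geq 0$ is simply $\langle u_{1},v_{1}\rangle\geq 0$.

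For the main case $v_{2}\neq O$ I would argue by contradiction: assume $\langle u_{1},v_{1}\rangle<0$. Combining this with the Cauchy--Bunyakovsky--Schwarz bound $\langle u_{2},v_{2}\rangle\leq\|u_{2}\|\|v_{2}\|$, the displayed hypothesis gives the strict inequality $\|v_{2}\|\,\|u\| < \|u_{2}\|\,\|v_{2}\|$. Since $\|v_{2}\|>0$ I may divide to obtain $\|u\|<\|u_{2}\|$, contradicting $\|u\|^{2}=\|u_{1}\|^{2}+\|u_{2}\|^{2}\geq\|u_{2}\|^{2}$. The degenerate subcase $v_{2}=O$ (i.e.\ $v\in V$) is even easier: then $\cos\angle(v,V^{\bot})=0$, the hypothesis becomes $\langle u,v\rangle\geq 0$, and since $v=v_{1}\in V$ we have $\langle u,v\rangle=\langle Pu,v_{1}\rangle=\langle Pu,Pv\rangle$, so the conclusion is immediate.

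There is no real obstacle here; the only thing to be careful about is ensuring strict inequality in the contradiction step (so that one actually gets a contradiction and not merely equality), which is guaranteed by the assumed strict negativity of $\langle u_{1},v_{1}\rangle$. The hypothesis $V\neq H$ and $\angle(v,V^{\bot})>0$ together merely ensure that $\|v_{2}\|/\|v\|$ makes sense and that the two cases above truly cover all possibilities.
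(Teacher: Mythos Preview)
Your proof is correct and follows essentially the same route as the paper, which defers the nontrivial case to Lemma~\ref{lemm:phi_equal_psi-helper-2} and there uses the same chain $\|u\|\|v_{2}\|-\langle u_{1},v_{1}\rangle\leq\langle u_{2},v_{2}\rangle\leq\|u_{2}\|\|v_{2}\|$. The only cosmetic difference is that you phrase it as a contradiction yielding $\langle u_{1},v_{1}\rangle\geq0$, whereas the paper's direct argument (assuming additionally $u_{1}\neq O$) obtains the strict inequality $\langle u_{1},v_{1}\rangle>0$.
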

\begin{proof}
If $V=\left\{ O\right\} $, $v=O$, $Pv=O$ ($\angle(v,V^{\bot})=0$)
or $Pu=O$ the conclusion of the lemma is trivial. Otherwise, apply
Lemma \ref{lemm:phi_equal_psi-helper-2}.\end{proof}
\begin{lemma}
\label{lemm:phi_equal_psi-helper-2}Let $P$ be an orthogonal projection
onto a closed subspace $V$ of Hilbert space $H$, with $V\neq H$
and $V\neq\left\{ O\right\} $. Let $v\in H$, such that $0<\angle\left(v,V^{\bot}\right)<\nicefrac{\pi}{2}$.
Then for all $u\in H$,
\[
\left(\left\langle u,v\right\rangle \geq\cos\angle\left(v,V^{\bot}\right)\,\left\Vert u\right\Vert \left\Vert v\right\Vert \mbox{ and }Pu\neq O\right)\quad\Longrightarrow\quad\left\langle Pu,Pv\right\rangle >0\mbox{.}
\]
\end{lemma}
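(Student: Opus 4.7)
The plan is to work through the orthogonal decomposition $u = u_1 \dotplus u_2$ and $v = v_1 \dotplus v_2$ as in Remark \ref{rem:decomposition}, and exploit the formula $\cos\angle(v,V^{\bot}) = \|v_2\|/\|v\|$ from \eqref{eq:definition-Psi}. The conclusion $\langle Pu,Pv\rangle > 0$ is just $\langle u_1,v_1\rangle > 0$, so the whole task is to show that the hypotheses force strict positivity of this inner product.

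First I would rewrite the hypothesis $\langle u,v\rangle \geq \cos\angle(v,V^{\bot})\,\|u\|\|v\|$ using orthogonality of the two components and \eqref{eq:definition-Psi} as
\[
\langle u_1,v_1\rangle + \langle u_2,v_2\rangle \;\geq\; \|v_2\|\,\|u\|,
\]
which rearranges to $\langle u_1,v_1\rangle \geq \|v_2\|\,\|u\| - \langle u_2,v_2\rangle$. Then I would apply the classical CBS inequality in $V^{\bot}$, namely $\langle u_2,v_2\rangle \leq \|u_2\|\|v_2\|$, to obtain the lower bound
\[
\langle u_1,v_1\rangle \;\geq\; \|v_2\|\bigl(\|u\| - \|u_2\|\bigr).
\]

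Finally, I would use the two ``non-degeneracy'' hypotheses to make this lower bound strictly positive. The condition $0 < \angle(v,V^{\bot})$ gives $\|v_2\| > 0$. The condition $Pu \neq O$ gives $\|u_1\| > 0$, so $\|u\|^2 = \|u_1\|^2 + \|u_2\|^2 > \|u_2\|^2$ and hence $\|u\| > \|u_2\|$. Both factors on the right-hand side are strictly positive, yielding $\langle Pu,Pv\rangle = \langle u_1,v_1\rangle > 0$.

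There is no real obstacle here; the argument is essentially a bookkeeping exercise once one sees that the hypothesis $\angle(u,v) \leq \angle(v,V^{\bot})$ combined with $Pu \neq O$ forces the Pythagorean inequality $\|u\| > \|u_2\|$ to be strict, which is exactly the slack needed to turn the CBS estimate into a strict inequality. The only thing to be careful about is that the upper bound $\angle(v,V^{\bot}) < \pi/2$ is used implicitly to guarantee $v_1 \neq O$ (so $Pv$ is a legitimate direction), although the proof of strict positivity itself does not require this.
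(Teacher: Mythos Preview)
Your proof is correct and mirrors the paper's argument step for step. One minor slip in the final paragraph: since $\cos\angle(v,V^{\bot}) = \|v_2\|/\|v\|$, it is the \emph{upper} bound $\angle(v,V^{\bot}) < \pi/2$ that forces $\|v_2\| > 0$, while the \emph{lower} bound $\angle(v,V^{\bot}) > 0$ is what forces $v_1 \neq O$; you have these roles swapped, but since both hypotheses are present the argument goes through unchanged.
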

\begin{proof}
We use notation from Remark \ref{rem:decomposition}. As $0<\angle\left(v,V^{\bot}\right)<\nicefrac{\pi}{2}$
so $v\neq O$, $v_{1}\neq O$, and $v_{2}\neq O$.

To prove the conclusion of the lemma, we assume $u_{1}\neq O$ and
\begin{equation}
\left\langle u_{1},v_{1}\right\rangle +\left\langle u_{2},v_{2}\right\rangle =\left\langle u,v\right\rangle \geq\cos\angle\left(v,V^{\bot}\right)\,\left\Vert u\right\Vert \left\Vert v\right\Vert \overset{\eqref{eq:definition-Psi}}{=}\left\Vert u\right\Vert \left\Vert v_{2}\right\Vert \label{eq:pomocna1-1}
\end{equation}
From (\ref{eq:pomocna1-1}), as $\left\Vert u_{2}\right\Vert <\left\Vert u\right\Vert $
we get 
\[
\left\Vert u\right\Vert \left\Vert v_{2}\right\Vert -\left\langle u_{1},v_{1}\right\rangle \overset{(\ref{eq:pomocna1-1})}{\leq}\left\langle u_{2},v_{2}\right\rangle \leq\left\Vert u_{2}\right\Vert \left\Vert v_{2}\right\Vert <\left\Vert u\right\Vert \left\Vert v_{2}\right\Vert .
\]
Thus, $\left\langle u_{1},v_{1}\right\rangle >0$ follows by subtraction
of $\left\Vert u\right\Vert \left\Vert v_{2}\right\Vert $ from both
sides of the previous inequality. \end{proof}
\begin{proposition}
\label{prop:phi_equal_psi-second_result}Let $P$ be an orthogonal
projection onto a closed subspace $V$ of Hilbert space $H$, such
that $V\neq H$ and $\dim V\geq2$. Let $v\in H$, such that $v\neq O$
and $\angle(v,V^{\bot})>0$. Then $\angle(v,V^{\bot})<\nicefrac{\pi}{2}$
if and only if for all $u\in H$, 
\begin{equation}
\left(\left\langle u,v\right\rangle \geq\cos\angle(v,V^{\bot})\,\left\Vert u\right\Vert \left\Vert v\right\Vert \mbox{ and }Pu\neq O\right)\enskip\Longrightarrow\enskip\left\langle Pu,Pv\right\rangle >0\mbox{.}\label{eq:second_result}
\end{equation}
\end{proposition}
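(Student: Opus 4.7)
The plan is to prove the two implications of the biconditional separately. The forward direction is an immediate corollary of Lemma \ref{lemm:phi_equal_psi-helper-2}: assuming $\angle(v,V^{\bot}) < \nicefrac{\pi}{2}$, all of that lemma's hypotheses hold ($V \neq H$ is given, $V \neq \{O\}$ follows from $\dim V \geq 2$, and $0 < \angle(v,V^{\bot}) < \nicefrac{\pi}{2}$ is exactly what is required), so (\ref{eq:second_result}) is a direct consequence.

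For the converse I would argue by contraposition: suppose $\angle(v,V^{\bot}) \geq \nicefrac{\pi}{2}$ and produce a vector $u$ violating (\ref{eq:second_result}). The first step is to observe that equality must hold. Since $V \neq H$ there is some nonzero $w \in V^{\bot}$, and because $V^{\bot}$ is closed under negation the identity $\angle(v,w)+\angle(v,-w)=\pi$ (valid for $v, w \neq O$) gives $\min\{\angle(v,w),\angle(v,-w)\}\leq\nicefrac{\pi}{2}$, whence $\angle(v,V^{\bot})\leq\nicefrac{\pi}{2}$ by the infimum definition (\ref{eq:angle_vector_subspace}). Combined with the contrapositive hypothesis this forces $\angle(v,V^{\bot})=\nicefrac{\pi}{2}$. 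Reading the infimum condition as $\left\langle v,w\right\rangle \leq 0$ for every $w \in V^{\bot}$ and then substituting $-w$ for $w$, one concludes $\left\langle v,w\right\rangle = 0$ for every $w \in V^{\bot}$, so $v \in (V^{\bot})^{\bot} = V$. In particular $Pv=v$ and $\cos\angle(v,V^{\bot})=0$.

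With $v \in V$ and $\dim V \geq 2$, I would choose $z \in V$ with $\left\Vert z\right\Vert=1$ and $z \perp v$. Setting $u=z$ then gives $\left\langle u,v\right\rangle = 0 = \cos\angle(v,V^{\bot})\,\left\Vert u\right\Vert \left\Vert v\right\Vert$ (so the hypothesis of the implication is met with equality) and $Pu = z \neq O$, while $\left\langle Pu,Pv\right\rangle = \left\langle z,v\right\rangle = 0$, which violates the strict inequality $>0$. This provides the desired counterexample.

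The only nontrivial step in the whole argument is the initial observation that $\angle(v,V^{\bot})\leq\nicefrac{\pi}{2}$ holds automatically whenever $V^{\bot}\neq\{O\}$; this is what collapses the contrapositive hypothesis to $v \in V$. Once $Pv=v$, the counterexample is essentially forced by the fact that $\dim V \geq 2$ leaves room inside $V$ orthogonal to $v$.
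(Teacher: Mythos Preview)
Your proposal is correct and follows essentially the same approach as the paper: the forward direction via Lemma~\ref{lemm:phi_equal_psi-helper-2}, the converse by contraposition, reducing to $\angle(v,V^{\bot})=\nicefrac{\pi}{2}$ and hence $v\in V$, and then the counterexample $u=z\in V$ with $z\perp v$. The only cosmetic difference is that the paper obtains $\angle(v,V^{\bot})\leq\nicefrac{\pi}{2}$ directly from formula~(\ref{eq:definition-Psi}) (since $\arccos$ of a nonnegative ratio lies in $[0,\nicefrac{\pi}{2}]$), whereas you derive it from the identity $\angle(v,w)+\angle(v,-w)=\pi$; both routes are equally valid.
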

\begin{proof}
One implication is proved in lemma \ref{lemm:phi_equal_psi-helper-2}.

Instead of proving that (\ref{eq:second_result}) implies $\angle(v,V^{\bot})<\nicefrac{\pi}{2}$,
we prove the contraposition: $\angle(v,V^{\bot})\geq\nicefrac{\pi}{2}$
implies that there exists an $u\neq O$ such that $\left\langle u,v\right\rangle \geq\cos\angle\left(v,V^{\bot}\right)\,\left\Vert u\right\Vert \left\Vert v\right\Vert $,
$Pu\neq O$ and $\left\langle Pu,\, Pv\right\rangle \leq0$. From (\ref{eq:definition-Psi}),
$\angle\left(v,V^{\bot}\right)\geq\nicefrac{\pi}{2}$, the proposition assumptions $V\neq H$ and
$v\neq O$, we can conclude that $\angle\left(v,V^{\bot}\right)=\nicefrac{\pi}{2}$.
Therefore $\cos\angle\left(v,V^{\bot}\right)=0$ and $v\in V$. As
$\dim V\geq2$ there exists $z\in V$ such that $z\perp v$ and $\left\Vert z\right\Vert =1$.
For $u=z$ we get $\left\langle u,v\right\rangle =0\geq\cos\angle(v,V^{\bot})\,\left\Vert u\right\Vert \left\Vert v\right\Vert $
and $\left\langle Pu,\, Pv\right\rangle =\left\langle z,v\right\rangle =0$.
\end{proof}
The following lemma allows us to go from just one $u\in C_{H}\left(v,\varphi\right)$
such that $Pu$ is on the boundary of $C_{V}\left(P\, v,\theta\right)$
and conclude that all of $C_{V}\left(P\, v,\theta\right)$ is inside
of the projection of $C_{H}\left(v,\varphi\right)$.
\begin{lemma}
\label{lem:extreme-bound-to-projected-cone} Let $\theta\in\left[0,\pi\right]$,
$u\in C_{H}\left(v,\varphi\right)$, $P\, u\neq O$
and $\left\langle P\, u,P\, v\right\rangle =\cos\theta\left\Vert P\, u\right\Vert \left\Vert P\, v\right\Vert $.
Then $C_{V}\left(P\, v,\theta\right)\subseteq P\left[C_{H}\left(v,\varphi\right)\right]$. \end{lemma}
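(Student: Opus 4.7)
The plan is to exploit two structural properties of $P\left[C_{H}(v,\varphi)\right]$ which, combined with the single ``boundary'' vector $Pu$, force all of $C_{V}(Pv,\theta)$ to lie inside. First, $P\left[C_{H}(v,\varphi)\right]$ is closed under positive scaling, inherited from $C_{H}(v,\varphi)$ being a cone with apex at $O$. Second, it is invariant under every orthogonal transformation $R$ of $V$ that fixes $Pv$: extending $R$ by the identity on $V^{\bot}$ yields an isometry of $H$ preserving $v=Pv+v_{2}$ and commuting with $P$, so both $C_{H}(v,\varphi)$ and its image under $P$ are carried to themselves.

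The workhorse will be the elementary additivity $sv+tu\in C_{H}(v,\varphi)$ for every $s,t\geq 0$. Starting from $\langle sv+tu,v\rangle\geq s\|v\|^{2}+t\cos\varphi\,\|u\|\|v\|$ and the triangle inequality $\|sv+tu\|\leq s\|v\|+t\|u\|$, a short case split on the sign of $\cos\varphi$ confirms the cone inequality (when $\cos\varphi<0$ one also invokes the reverse triangle inequality $s\|u\|\leq\|sv+tu\|+t\|v\|$). Projecting, this produces the ``pie slice'' $\{s\,Pv+t\,Pu:\, s,t\geq 0\}\subseteq P\left[C_{H}(v,\varphi)\right]$.

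For the generic case $\theta\in[0,\pi)$ with $Pv\neq O$, fix a nonzero $w\in C_{V}(Pv,\theta)$. If $w$ is not collinear with $Pv$, the rotational invariance from the first step lets me replace $u$ by $R(u)$ for a suitable $R$ that carries $Pu$ into the $2$-dimensional plane through $Pv$ and $w$, on the same side of $Pv$ as $w$; such $R$ exists because the orthogonal group of $V$ fixing $Pv$ acts transitively on the unit vectors of $V$ orthogonal to $Pv$. A brief planar computation then shows that the pie slice coincides with the angular sector between $Pv$ and the ray at angle $\theta$ on that side, so $w=s\,Pv+t\,R(Pu)$ with $s,t\geq 0$ and $w=P(sv+tR(u))\in P\left[C_{H}(v,\varphi)\right]$. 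When $w$ is collinear with $Pv$, either $w=\rho Pv$ with $\rho\geq 0$ (handled by scaling $v$ alone) or $w$ is antiparallel to $Pv$, which is ruled out by $\theta<\pi$.

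The hard part is the degenerate case $\theta=\pi$ (and the companion $Pv=O$, where $C_{V}(Pv,\theta)=V$): now $Pu$ and $Pv$ are collinear, the plane through them collapses, and the pie-slice argument no longer reaches across $V$. I would first observe that the hypothesis forces $\varphi>\angle(v,V^{\bot})$, for otherwise Theorem~\ref{thm:inequality} or Lemma~\ref{lemm:phi_equal_psi-helper} would give $\langle Pu,Pv\rangle\geq 0$, contradicting $\theta=\pi$. This strict inequality guarantees the existence of $y\in V^{\bot}$ with $\langle y,v\rangle>\cos\varphi\,\|y\|\|v\|$: take $y=v_{2}$ when $v_{2}\neq O$, and any nonzero $y\in V^{\bot}$ when $v_{2}=O$ (which forces $\cos\varphi<0$). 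Then for arbitrary $w\in V$ and $t>0$ sufficiently large, an asymptotic check gives $w+ty\in C_{H}(v,\varphi)$ with $P(w+ty)=w$, so $V\subseteq P\left[C_{H}(v,\varphi)\right]$. The bulk of the technical effort is concentrated in this final case and in the sign-bookkeeping inside the additivity lemma.
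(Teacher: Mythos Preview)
Your argument is correct, but it takes a considerably longer route than the paper's. The paper produces, for each $w\in C_{V}(Pv,\theta)$, an explicit preimage in one stroke: writing $u=u_{1}\dotplus u_{2}$ as in Remark~\ref{rem:decomposition}, set $\tilde{u}=w\dotplus\dfrac{\|w\|}{\|u_{1}\|}\,u_{2}$. Then $\|\tilde{u}\|=\dfrac{\|w\|}{\|u_{1}\|}\|u\|$ by Pythagoras, and the chain
\[
\langle\tilde{u},v\rangle\;\geq\;\cos\theta\,\|w\|\|v_{1}\|+\tfrac{\|w\|}{\|u_{1}\|}\langle u_{2},v_{2}\rangle
\;=\;\tfrac{\|w\|}{\|u_{1}\|}\bigl(\langle u_{1},v_{1}\rangle+\langle u_{2},v_{2}\rangle\bigr)
\;=\;\tfrac{\|w\|}{\|u_{1}\|}\langle u,v\rangle
\;\geq\;\cos\varphi\,\|\tilde{u}\|\|v\|
\]
finishes the proof. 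This single construction covers $\theta=\pi$ and $Pv=O$ without any case split, uses no symmetry of $V$, and appeals to nothing beyond the two hypotheses of the lemma.

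Your approach instead assembles the image from two structural facts---the additivity $sv+tu\in C_{H}(v,\varphi)$ and rotational invariance of $P[C_{H}(v,\varphi)]$ under isometries of $V$ fixing $Pv$---which handles $\theta\in(0,\pi)$ cleanly. But when $Pu$ and $Pv$ are collinear ($\theta\in\{0,\pi\}$ or $Pv=O$) the pie-slice degenerates, and you fall back on Theorem~\ref{thm:inequality} and Lemma~\ref{lemm:phi_equal_psi-helper} to force $\varphi>\angle(v,V^{\bot})$, followed by a separate asymptotic construction $w+ty$. This is not circular (those results are proved independently), but it makes what is meant to be a basic building block depend on the paper's harder inequalities. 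What you gain is a conceptual explanation of \emph{why} the projected set is round; what the paper's construction gains is brevity, uniformity, and self-containment. As a minor point, your reverse-triangle bookkeeping is garbled: the clean way to see $sv+tu\in C_{H}(v,\varphi)$ for all $s,t\geq 0$ is that $\lambda\mapsto\cos\angle(u+\lambda v,v)$ is non-decreasing on $[0,\infty)$ (its derivative has numerator $\|u\|^{2}\|v\|^{2}-\langle u,v\rangle^{2}\geq 0$), which avoids the sign split on $\cos\varphi$ altogether.
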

\begin{proof}
We use notation from Remark \ref{rem:decomposition}. Note that $u_{1}\neq O$, and
so if $\varphi=0$, then the case $v_{1}= 0$ is excluded from the conclusion of the lemma. On the 
other hand, when $\varphi\neq 0$ then the case $v_{1}= 0$ is included in the lemma. 
Notice that $u\in C_{H}\left(v,\varphi\right)$ corresponds
to $\left\langle u,v\right\rangle \geq\cos\varphi\left\Vert u\right\Vert \left\Vert v\right\Vert $.

We will prove that for each $w\in V$ such that 
$\left\langle w,v_{1}\right\rangle \geq\cos\theta\left\Vert w\right\Vert \left\Vert v_{1}\right\Vert $,
there exists $z\in V^{\perp}$ such that $\tilde{u}\overset{{\rm def}}{=}w\dotplus z$,
that satisfies $\left\langle \tilde{u},v\right\rangle \geq\cos\varphi\left\Vert \tilde{u}\right\Vert \left\Vert v\right\Vert $.
Note that by the definition of $\tilde{u}$ we get $P\tilde{u}=w$.
Take ${\displaystyle z=u_{2}\frac{\left\Vert w\right\Vert }{\left\Vert u_{1}\right\Vert }}$,
then ${\displaystyle \left\Vert \tilde{u}\right\Vert =\frac{\left\Vert w\right\Vert }{\left\Vert u_{1}\right\Vert }\,\left\Vert u\right\Vert }$
and 
\begin{multline*}
\left\langle \tilde{u},v\right\rangle =\left\langle w,v_{1}\right\rangle +\frac{\left\Vert w\right\Vert }{\left\Vert u_{1}\right\Vert }\left\langle u_{2},v_{2}\right\rangle \geq\cos\theta\left\Vert w\right\Vert \left\Vert v_{1}\right\Vert +\frac{\left\Vert w\right\Vert }{\left\Vert u_{1}\right\Vert }\left\langle u_{2},v_{2}\right\rangle =\\
=\frac{\left\Vert w\right\Vert }{\left\Vert u_{1}\right\Vert }\left(\cos\theta\left\Vert u_{1}\right\Vert \left\Vert v_{1}\right\Vert +\left\langle u_{2},v_{2}\right\rangle \right)=\frac{\left\Vert w\right\Vert }{\left\Vert u_{1}\right\Vert }\left(\left\langle u_{1},v_{1}\right\rangle +\left\langle u_{2},v_{2}\right\rangle \right)=\\
=\frac{\left\Vert w\right\Vert }{\left\Vert u_{1}\right\Vert }\left\langle u,v\right\rangle \geq\frac{\left\Vert w\right\Vert }{\left\Vert u_{1}\right\Vert }\cos\varphi\,\left\Vert u\right\Vert \left\Vert v\right\Vert =\cos\varphi\,\left\Vert \tilde{u}\right\Vert \left\Vert v\right\Vert \mbox{.}
\end{multline*}\end{proof}

\Acknowledgements{This paper is a result of author's work on doctoral dissertation
at University of Zagreb. The author would like to thank to his adviser,
prof. Josip Tamba\v{c}a. The author would also like thank to prof. Sever
Dragomir and prof. Marko Mati\'{c}, for their helpful comments.}

\bigskip

\bigskip

\begin{center}
{\bf Ortogonalna projekcija beskona\v{c}nog kru\v{z}nog konusa u realnom Hibertovom prostoru}
\end{center}

\bigskip

\begin{center}
{\it Mate Kosor}
\end{center}

\bigskip

\begin{center}
\begin{minipage}[c]{9.2cm}
{\small \hspace*{0.5cm} {\sc Sa\v{z}etak.}
Dajemo potpuni opis ortogonalnih projekcija beskona\v{c}nog kru\v{z}nog sto\v{s}ca u realnim 
Hilbertovim prostorima. Druga interpretacija je da smo za dva vektora dobili optimalnu 
ocjenu kuta izme\dj{}u ortogonalnih projekcija tih vektora. Ta ocjena ovisi o kutu izme\dj{}u 
polazna dva vektora i polo\v{z}aju samo jednog od njih. Me\dj{}u rezultatima je tako\dj{}er
doprinos nejednakostima tipa Cauchy-Bunyakovsky-Schwarz.%
}
\end{minipage}
\end{center}

\endarticle 

\end{document}